\documentclass[11pt]{article}

\usepackage[margin=1.05in]{geometry}
\usepackage{amsmath,amssymb,amsthm,mathtools}
\usepackage{mathrsfs}
\usepackage{enumitem}
\usepackage{hyperref}

\hypersetup{
	colorlinks=true,
	linkcolor=blue,
	citecolor=blue,
	urlcolor=blue
}

\newtheorem{theorem}{Theorem}[section]
\newtheorem{lemma}[theorem]{Lemma}
\newtheorem{proposition}[theorem]{Proposition}

\theoremstyle{definition}

\theoremstyle{remark}
\newtheorem{remark}[theorem]{Remark}

\numberwithin{equation}{section}

\newcommand{\R}{\mathbb{R}}

\newcommand{\norm}[1]{\left\lVert #1 \right\rVert}
\newcommand{\Dcal}{\mathcal D}
\newcommand{\Pcal}{\mathcal P}

\title{A Sharp Mass Threshold for Fractional Choquard Equations with Lower Hardy--Littlewood--Sobolev and \(L^2\)-Critical Terms}

\author{Yongpeng Chen and Zhipeng Yang\thanks{Corresponding author: yangzhipeng326@163.com.}}

\date{}

\AtEndDocument{%
	\par
	\bigskip
	\bigskip

	\noindent
	\textbf{Yongpeng Chen:}\\[0.2em]
	\textsc{School of Science, Guangxi University of Science and Technology, Liuzhou, China}\\[0.3em]
	\textit{E-mail address}: \texttt{yongpengchen@mail.bnu.edu.cn}\par\vspace{1.5em}

	\noindent
	\textbf{Zhipeng Yang}\\[0.2em]
	\textsc{Department of Mathematics, Yunnan Normal University, Kunming, China}\\
	\textsc{Yunnan Key Laboratory of Modern Analytical Mathematics and Applications, Yunnan Normal University, Kunming, China}\\[0.3em]
	\textit{E-mail address}: \texttt{yangzhipeng326@163.com}%
}

\begin{document}

\maketitle

\begin{abstract}
For \(\frac12\le s<1\), we study a fractional Choquard equation with
prescribed mass and nonlinearities at the lower Hardy--Littlewood--Sobolev
and \(L^2\)-critical exponents. The sharp
Hardy--Littlewood--Sobolev and Choquard Gagliardo--Nirenberg inequalities
determine an explicit critical mass \(a_*\). For \(0<a\le a_*\), we compute
the exact infimum of the constrained energy and prove that it is not attained
and that no normalized solution exists. For \(a>a_*\), the energy is
unbounded from below on the mass sphere, while the Pohozaev set is nonempty.
\end{abstract}

\smallskip
\noindent\textbf{Keywords}: Fractional Choquard equation; normalized solutions; sharp mass threshold.

\smallskip
\noindent\textbf{Mathematics Subject Classification (2020)}: 35R11, 35A15, 35B33.

\section{Introduction and main results}
\label{sec:intro}

Choquard equations, also called Hartree equations, are nonlocal elliptic
equations whose nonlinear terms involve a Riesz potential. A fractional
model is
\begin{equation}
\label{eq1.1}
(-\Delta)^s u+V(x)u
=
\big(I_\alpha*|u|^p\big)|u|^{p-2}u
\quad\text{in }\R^N,
\end{equation}
where \(0<s<1\), \(\alpha\in(0,N)\), and
\[
I_\alpha(x)=A_{N,\alpha}|x|^{-(N-\alpha)}.
\]
The variational analysis of \eqref{eq1.1} uses the
Hardy--Littlewood--Sobolev inequality together with the fractional Sobolev
embedding.

For the equation with fixed frequency
\[
(-\Delta)^s u+\omega u
=
\big(I_\alpha*|u|^p\big)|u|^{p-2}u
\quad\text{in }\R^N,
\]
the variational range in the local case \(s=1\) is
\[
\frac{N+\alpha}{N}<p<\frac{N+\alpha}{N-2}.
\]
Existence and qualitative properties of ground states in this setting were
studied, among others, in \cite{Lieb1977,MorozVanSchaftingen2013,MorozVanSchaftingen2015}.
The fractional counterpart was developed by d'Avenia, Siciliano and Squassina
\cite{DAveniaSicilianoSquassina2015}. The Pohozaev argument below uses the
regularity estimates of Ambrosio \cite{Ambrosio2025Remarks} and the integration
formulas of Cingolani, Gallo and Tanaka \cite{CingolaniGalloTanaka2024}.
The sharp Hardy--Littlewood--Sobolev inequality of
Lieb \cite{Lieb1983} gives the lower endpoint constant used below.

We are concerned with the normalized problem, where the mass is prescribed and
the frequency is a Lagrange multiplier. The constraint is
\[
\int_{\R^N}|u|^2\,dx=a>0.
\]
This constraint arises in the study of standing waves with conserved mass.
For local nonlinear Schr\"odinger equations, constrained variational methods go
back at least to Jeanjean \cite{Jeanjean1997}. For nonlocal models, Bellazzini,
Jeanjean and Luo \cite{BellazziniJeanjeanLuo2013} treated the
Schr\"odinger--Poisson--Slater equation in \(\R^3\), while Cingolani and
Jeanjean \cite{CingolaniJeanjean2019} studied the planar Schr\"odinger--Poisson
system. For normalized Choquard equations, we refer to Li and Ye
\cite{LiYe2014} and Yuan, Chen and Tang \cite{YuanChenTang2020}. Fractional
normalized Choquard equations and critical variants were studied
in \cite{LiLuo2020,HeRadulescuZou2022,FengHeMeng2023,LanHeMeng2023}. Lower
Hardy--Littlewood--Sobolev critical terms under mass constraints also occur in
\cite{ChenYangTJM2025,ChenKumarYangZhang2026}.

We study the fractional Choquard energy with nonlinearities at two critical
exponents. Set
\[
\Dcal_p(u)=
\int_{\R^N}\big(I_\alpha*|u|^p\big)|u|^p\,dx.
\]
For the dilation that preserves the mass
\begin{equation}
\label{eq1.2}
u_\sigma(x)=\sigma^{\frac N2}u(\sigma x),
\qquad \sigma>0,
\end{equation}
one has
\[
\norm{(-\Delta)^{s/2}u_\sigma}_2^2
=
\sigma^{2s}\norm{(-\Delta)^{s/2}u}_2^2,
\qquad
\Dcal_p(u_\sigma)
=
\sigma^{N(p-1)-\alpha}\Dcal_p(u).
\]
Thus the two relevant endpoint exponents are
\[
q_L=\frac{N+\alpha}{N},
\qquad
 t_c=\frac{N+\alpha+2s}{N}.
\]
The first exponent is the lower Hardy--Littlewood--Sobolev endpoint, for which
\(\Dcal_{q_L}\) is invariant under \eqref{eq1.2}. The second is the
\(L^2\)-critical Choquard exponent, for which \(\Dcal_{t_c}\) has the same
scaling as the kinetic term.

We study
\begin{equation}
\label{eq1.3}
(-\Delta)^s u
=
\lambda u
+
\big(I_\alpha*|u|^{q_L}\big)|u|^{q_L-2}u
+
\big(I_\alpha*|u|^{t_c}\big)|u|^{t_c-2}u
\quad\text{in }\R^N,
\end{equation}
under the constraint \(\norm{u}_2^2=a\). Throughout the paper,
\begin{equation}
\label{eq1.4}
N\ge3,
\qquad
\frac12\le s<1,
\qquad
0<\alpha<N,
\end{equation}
and all functions are real-valued. Define
\[
S(a)=
\left\{
 u\in H^s(\R^N):\norm{u}_2^2=a
\right\}
\]
and
\[
J(u)
=
\frac12\norm{(-\Delta)^{s/2}u}_2^2
-
\frac{1}{2q_L}\Dcal_{q_L}(u)
-
\frac{1}{2t_c}\Dcal_{t_c}(u),
\qquad
u\in S(a).
\]
Critical points of \(J|_{S(a)}\) are weak normalized solutions of
\eqref{eq1.3}.

The analysis involves two sharp constants. The lower endpoint constant is
\begin{equation}
\label{eq1.5}
L_{N,\alpha}
=
\sup_{u\in L^2(\R^N)\setminus\{0\}}
\frac{\Dcal_{q_L}(u)}{\norm{u}_2^{2q_L}},
\end{equation}
and the critical Choquard Gagliardo--Nirenberg constant is
\begin{equation*}
C_{N,\alpha,s}
=
\sup_{u\in H^s(\R^N)\setminus\{0\}}
\frac{\Dcal_{t_c}(u)}
{\norm{u}_2^{2(t_c-1)}\norm{(-\Delta)^{s/2}u}_2^2}.
\end{equation*}
The corresponding mass threshold is
\[
a_*
=
\left(
\frac{t_c}{C_{N,\alpha,s}}
\right)^{\frac{N}{2s+\alpha}}.
\]

Our first result gives the exact energy infimum below the critical mass.

\begin{theorem}
\label{Thm1.1}
Assume \eqref{eq1.4} and let \(0<a<a_*\). Then
\begin{equation*}
\inf_{u\in S(a)}J(u)
=
-
\frac{1}{2q_L}L_{N,\alpha}a^{q_L}.
\end{equation*}
Moreover, this infimum is not attained on \(S(a)\), and problem
\eqref{eq1.3} has no normalized solution with mass \(a\).
\end{theorem}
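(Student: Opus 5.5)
The plan is to combine the two sharp inequalities into a pointwise lower bound for \(J\) on \(S(a)\). The upper bound for the infimum then comes from a mass-preserving dilation of an HLS optimizer, and nonexistence of solutions from the Pohozaev identity. Note first that \(t_c-1=\frac{2s+\alpha}{N}\), so \(a_*^{t_c-1}=t_c/C_{N,\alpha,s}\).

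\emph{Lower bound.} For \(u\in S(a)\), \eqref{eq1.5} gives \(\Dcal_{q_L}(u)\le L_{N,\alpha}a^{q_L}\). The definition of \(C_{N,\alpha,s}\) gives \(\Dcal_{t_c}(u)\le C_{N,\alpha,s}a^{t_c-1}\norm{(-\Delta)^{s/2}u}_2^2\). Hence
\[
J(u)\ \ge\ \frac12\Bigl(1-\bigl(\tfrac{a}{a_*}\bigr)^{t_c-1}\Bigr)\norm{(-\Delta)^{s/2}u}_2^2-\frac{1}{2q_L}L_{N,\alpha}a^{q_L}.
\]
For \(a<a_*\) the coefficient of the kinetic term is positive. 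Since \(u\ne0\) in \(H^s\), we have \(\norm{(-\Delta)^{s/2}u}_2>0\). So the inequality \(J(u)>-\frac{1}{2q_L}L_{N,\alpha}a^{q_L}\) is \emph{strict} for every \(u\in S(a)\). Once the infimum is identified with this value, this strictness immediately gives non-attainment.

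\emph{Upper bound.} By Lieb's classification of the lower-endpoint HLS optimizers, the supremum in \eqref{eq1.5} is attained by \(Q(x)=c(1+|x|^2)^{-N/2}\), up to translation and dilation. This function is smooth, decays like \(|x|^{-N}\), and its gradient decays like \(|x|^{-N-1}\), so \(Q\in H^1(\R^N)\subset H^s(\R^N)\). Normalize \(Q\) so that \(\norm{Q}_2^2=a\) and apply the dilation \eqref{eq1.2}. The scaling laws then give
\[
J(Q_\sigma)=\frac{\sigma^{2s}}2\norm{(-\Delta)^{s/2}Q}_2^2-\frac{1}{2q_L}L_{N,\alpha}a^{q_L}-\frac{\sigma^{2s}}{2t_c}\Dcal_{t_c}(Q).
\]
Letting \(\sigma\to0^+\), this tends to \(-\frac{1}{2q_L}L_{N,\alpha}a^{q_L}\), which proves the formula for the infimum.

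\emph{Nonexistence.} Suppose \(u\in S(a)\) solves \eqref{eq1.3} weakly for some \(\lambda\). I would establish the Pohozaev identity \(P(u)=0\), where \(P(u)=\frac{d}{d\sigma}J(u_\sigma)|_{\sigma=1}\). Since \(N(q_L-1)-\alpha=0\) and \(N(t_c-1)-\alpha=2s\), this reads
\[
s\norm{(-\Delta)^{s/2}u}_2^2-\frac{s}{t_c}\Dcal_{t_c}(u)=0 .
\]
To derive it, I would combine the Nehari identity, obtained by testing the equation with \(u\), with the dilation Pohozaev identity, obtained by testing with \(x\cdot\nabla u\). In the latter the \(\lambda\)-term and the \(q_L\)-term cancel against their Nehari counterparts, owing to the scaling invariance. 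Inserting the Gagliardo--Nirenberg bound then gives
\[
\norm{(-\Delta)^{s/2}u}_2^2\le \bigl(\tfrac{a}{a_*}\bigr)^{t_c-1}\norm{(-\Delta)^{s/2}u}_2^2 .
\]
This forces \(\norm{(-\Delta)^{s/2}u}_2=0\), hence \(u=0\), contradicting \(\norm{u}_2^2=a>0\).

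The main obstacle is justifying the Pohozaev identity for weak \(H^s\) solutions. Testing with \(x\cdot\nabla u\) requires extra regularity and decay, and the term \(|u|^{q_L-2}u\) is non-smooth when \(q_L<2\). Here I would invoke the regularity estimates of Ambrosio (which is where \(s\ge\frac12\) enters) to obtain \(u\in H^{2s}\) or \(C^{1,\beta}\)-type regularity with enough decay. I would then use the integration formulas of Cingolani--Gallo--Tanaka for the fractional term \(\int(-\Delta)^s u\,(x\cdot\nabla u)\), together with the Riesz-potential terms, through a standard cutoff-and-limit argument.
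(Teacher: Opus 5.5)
Your proposal is correct and follows the paper's proof. The lower bound and non-attainment come from combining \eqref{eq2.1} and \eqref{eq2.2}, which leaves a positive kinetic coefficient; the upper bound comes from the \(\sigma\to0^+\) dilation of a lower-endpoint HLS optimizer; and nonexistence comes from the Pohozaev identity \(P(u)=0\) (justified through Ambrosio's regularity and the Cingolani--Gallo--Tanaka formulas with a cutoff) combined with \eqref{eq2.2}. The only minor difference is that you take Lieb's explicit optimizer \(c(1+|x|^2)^{-N/2}\), which already lies in \(H^s(\R^N)\), and so you skip the \(C_c^\infty\) approximation step the paper uses in Lemma~\ref{Lem3.2}.
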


At the critical mass, the same level is the infimum, but it is not attained.
The proof uses the classification of critical Hardy--Littlewood--Sobolev
optimizers and shows that they do not satisfy the Euler--Lagrange equation
for the critical Choquard Gagliardo--Nirenberg inequality.

\begin{theorem}
\label{Thm1.2}
Assume \eqref{eq1.4} and let \(a=a_*\). Then
\begin{equation*}
\inf_{u\in S(a_*)}J(u)
=
-
\frac{1}{2q_L}L_{N,\alpha}a_*^{q_L}.
\end{equation*}
Moreover, this infimum is not attained on \(S(a_*)\), and problem \eqref{eq1.3}
has no normalized solution with mass \(a_*\).
\end{theorem}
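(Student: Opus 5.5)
We handle Theorem~\ref{Thm1.2} by splitting it into a lower bound, an upper bound, and two nonexistence arguments. For the lower bound, take any \(u\in S(a_*)\). By \eqref{eq1.5} we have \(\Dcal_{q_L}(u)\le L_{N,\alpha}a_*^{q_L}\). By the definition of \(C_{N,\alpha,s}\) and of \(a_*\),
\[
\frac{1}{2t_c}\Dcal_{t_c}(u)\le \frac{C_{N,\alpha,s}a_*^{t_c-1}}{2t_c}\norm{(-\Delta)^{s/2}u}_2^2=\frac12\norm{(-\Delta)^{s/2}u}_2^2,
\]
because \(t_c-1=(2s+\alpha)/N\). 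Hence \(J(u)\ge -\frac{1}{2q_L}L_{N,\alpha}a_*^{q_L}\).

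For the upper bound, take a Hardy--Littlewood--Sobolev optimizer \(v\) for \eqref{eq1.5}, normalized so that \(\norm{v}_2^2=a_*\). By Lieb's classification it is a multiple of \((1+|x|^2)^{-N/2}\) up to translation and dilation, and it has a bounded kinetic term (this needs \(s<1\), which holds). Since \(\Dcal_{q_L}\) is invariant under \eqref{eq1.2} and the other two terms scale like \(\sigma^{2s}\), we get
\[
J(v_\sigma)\to -\frac{1}{2q_L}L_{N,\alpha}a_*^{q_L}\quad\text{as }\sigma\to0.
\]
This gives the infimum value.

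For non-attainment, suppose \(u\in S(a_*)\) attains the infimum. Then both inequalities above must be equalities. The first equality forces \(u\) to be an HLS optimizer, i.e.\ \(u=c(\mu^2+|x-x_0|^2)^{-N/2}\) (up to sign, using \(|u|\)). The second equality forces \(u\) to be an optimizer of \(C_{N,\alpha,s}\); replacing \(u\) by \(|u|\) if necessary, it then satisfies the Euler--Lagrange equation
\[
(-\Delta)^s u+\omega u=\kappa\big(I_\alpha*u^{t_c}\big)u^{t_c-1}
\]
for some \(\omega,\kappa>0\). The \(\kappa,\omega\) are determined by the Pohozaev/Nehari relations. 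Note that \(u\in H^s\) holds, since \(u\) is a minimizer in \(S(a_*)\).

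The main obstacle is contradicting this equation for the explicit HLS profile. First, \(I_\alpha*u^{t_c}\) decays like \(|x|^{-(N-\alpha)}\), so the right side decays like \(|x|^{-(N-\alpha)-N(t_c-1)}=|x|^{-N-2s}\). For the left side, \((-\Delta)^s u\sim -c|x|^{-N-2s}\) at infinity, since \(u\in L^1\) with positive integral. Meanwhile \(\omega u\sim|x|^{-N}\), which dominates and cannot be balanced by the right side. This would force \(\omega=0\), contradicting \(\omega>0\). This asymptotic comparison is what I would try first, with care needed for the precise tail expansion of the fractional Laplacian of \((1+|x|^2)^{-N/2}\). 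As a fallback, compare exact values at \(x=0\) via explicit Fourier formulas.

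Finally, for nonexistence of normalized solutions, use the Pohozaev identity (available for \(s\ge\frac12\) via the cited regularity results). It gives
\[
s\norm{(-\Delta)^{s/2}u}_2^2=\frac{s}{t_c}\Dcal_{t_c}(u),
\]
since the \(q_L\) term carries zero scaling weight. So any solution has \(\Dcal_{t_c}(u)=t_c\norm{(-\Delta)^{s/2}u}_2^2\), meaning equality holds in the Gagliardo--Nirenberg bound at mass \(a_*\). Thus \(|u|\) is a GN optimizer, and testing the equation against \(u\) then also forces the HLS equality structure. The same contradiction as above applies. Alternatively, it would suffice that the solution forces \(J(u)\) to equal the infimum, which has already been excluded.
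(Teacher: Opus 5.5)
Your lower bound, upper bound and non-attainment argument are sound. The nonexistence step, however, has a genuine gap.

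\textbf{The gap in nonexistence.} Testing \eqref{eq1.3} against \(u\) yields only the scalar identity \(\norm{(-\Delta)^{s/2}u}_2^2=\lambda a_*+\Dcal_{q_L}(u)+\Dcal_{t_c}(u)\). Combined with \(P(u)=0\), this merely determines \(\lambda\), as in Lemma~\ref{Lem3.8}. It says nothing about whether \(\Dcal_{q_L}(u)=L_{N,\alpha}a_*^{q_L}\). Your alternative is circular for the same reason: \(P(u)=0\) gives \(J(u)=-\frac1{2q_L}\Dcal_{q_L}(u)\), which equals \(m(a_*)\) exactly when \(u\) maximizes the lower endpoint quotient, and that is what has to be proved. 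So Lieb's classification of equality cases, on which your non-attainment argument rests, is not available for a normalized solution.

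\textbf{What is missing.} Since \(P(u)=0\) makes \(u\) an optimizer of \eqref{eq2.2} at mass \(a_*\), \(u\) satisfies the Euler--Lagrange equation \eqref{eq3.8} against every \(v\in H^s(\R^N)\). Subtracting it from the weak form of \eqref{eq1.3} gives \((I_\alpha*|u|^{q_L})|u|^{q_L-2}u=\kappa u\) with \(\kappa=\Dcal_{q_L}(u)/a_*>0\). Next reduce to \(u\ge0\): an optimizer of \eqref{eq2.2} has fixed sign by equality in \(\norm{(-\Delta)^{s/2}|u|}_2\le\norm{(-\Delta)^{s/2}u}_2\), and \eqref{eq1.3} is odd. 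The positive resolvent kernel then gives \(u>0\) a.e., so \(F=u^{q_L}\) solves \(I_\alpha*F=\kappa F^{(N-\alpha)/(N+\alpha)}\). This says only that \(F\) is a critical point of the Hardy--Littlewood--Sobolev quotient. You therefore need the classification of all positive solutions of this integral equation (Chen--Li--Ou), not merely of maximizers. Only then is \(u\) the bubble, and only then can you run the tail contradiction. This is the route of Proposition~\ref{Pro3.11} and Lemmas~\ref{Lem3.12}--\ref{Lem3.15}.

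\textbf{A false step in the tail comparison.} The profile \((\mu^2+|x-x_0|^2)^{-N/2}\) is not in \(L^1(\R^N)\), since it decays exactly like \(|x|^{-N}\). Hence \((-\Delta)^su\) is not asymptotic to \(-c|x|^{-N-2s}\): the mass of \(u\) in \(B_{2|x|}\) grows like \(\log|x|\). The correct bound is \(|(-\Delta)^su(x)|\le C|x|^{-N-2s}(1+\log|x|)\), as in Lemma~\ref{Lem3.14}. This is still \(o(|x|^{-N})\), so the contradiction with \(\omega u\ge c|x|^{-N}\) survives once the estimate is done correctly.

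\textbf{Smaller points.} The bubble lies in \(H^1(\R^N)\subset H^s(\R^N)\), so your upper bound needs no restriction on \(s\). The paper avoids the explicit form altogether by approximating the optimizer in \(L^2\) by \(C_c^\infty\) functions. For non-attainment, passing to the minimizer \(|u|\) and invoking Lieb's equality classification directly is a legitimate, somewhat shorter alternative to the paper's Euler--Lagrange plus Chen--Li--Ou route.
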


For \(a>a_*\), direct minimization fails. Set
\[
P(u)
=
\norm{(-\Delta)^{s/2}u}_2^2
-
\frac1{t_c}\Dcal_{t_c}(u).
\]

\begin{theorem}
\label{Thm1.3}
Assume \eqref{eq1.4} and let \(a>a_*\). Then
\begin{equation*}
\inf_{u\in S(a)}J(u)=-\infty.
\end{equation*}
Moreover, the Pohozaev set
\begin{equation*}
\Pcal_a=
\left\{
 u\in S(a):P(u)=0
\right\}
\end{equation*}
is nonempty. If \(u\in S(a)\) is a normalized solution of \eqref{eq1.3}, then
\(u\in\Pcal_a\), and its Lagrange multiplier satisfies \(\lambda<0\).
\end{theorem}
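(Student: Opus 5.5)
We prove the three assertions of Theorem~\ref{Thm1.3} separately, using the mass-preserving dilation \eqref{eq1.2} and the sharp constant \(C_{N,\alpha,s}\).

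\textbf{Unboundedness from below.}
Since \(a>a_*\) and \(a_*^{(2s+\alpha)/N}=t_c/C_{N,\alpha,s}\), we have
\[
\frac{C_{N,\alpha,s}}{t_c}\,a^{t_c-1}>1 ,
\]
because \(t_c-1=(2s+\alpha)/N\). By definition of the supremum there is \(v\in H^s\setminus\{0\}\) with
\[
\Dcal_{t_c}(v)>(1-\varepsilon)\,C_{N,\alpha,s}\norm{v}_2^{2(t_c-1)}\norm{(-\Delta)^{s/2}v}_2^2 .
\]
Rescale \(v\) by a multiplicative constant so that \(\norm{v}_2^2=a\); the quotient is homogeneous of degree zero, so this is allowed. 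Choosing \(\varepsilon\) small then gives
\[
\frac1{t_c}\Dcal_{t_c}(v)>\norm{(-\Delta)^{s/2}v}_2^2 ,
\]
that is, \(P(v)<0\).

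Now apply \eqref{eq1.2}. Using \(N(t_c-1)-\alpha=2s\) and the invariance of \(\Dcal_{q_L}\),
\[
J(v_\sigma)=\frac{\sigma^{2s}}{2}P(v)-\frac1{2q_L}\Dcal_{q_L}(v).
\]
Hence \(J(v_\sigma)\to-\infty\) as \(\sigma\to\infty\).

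\textbf{\(\Pcal_a\) is nonempty.}
The identity \(P(v_\sigma)=\sigma^{2s}P(v)\) shows that dilation cannot change the sign of \(P\). So we interpolate along a path in \(S(a)\) from a function with \(P>0\) to one with \(P<0\), and apply the intermediate value theorem.

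For \(P>0\), take any \(w\in S(a)\) whose mass is spread out so that \(\Dcal_{t_c}(w)\) is small. Concretely, \(P(w)\ge\norm{(-\Delta)^{s/2}w}_2^2\bigl(1-\tfrac{C}{t_c}a^{t_c-1}\bigr)\) is not enough, so we use a different family. Take
\[
w=\sqrt{a}\,\frac{\phi_1+\dots+\phi_k}{\norm{\phi_1+\dots+\phi_k}_2},
\]
a sum of \(k\) far-separated translates of a fixed bump, normalized in \(L^2\). Each bump carries mass \(a/k\), so the kinetic term scales like \(k\cdot(a/k)\), while \(\Dcal_{t_c}\) scales like \(k\cdot(a/k)^{t_c}\) plus negligible cross terms. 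For \(k\) large this gives \(P(w)>0\).

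For the path, take \(\gamma(\theta)=\sqrt{a}\,\dfrac{(1-\theta)w+\theta v}{\norm{(1-\theta)w+\theta v}_2}\). To keep the denominator away from zero we place \(v\) far from the support of \(w\), so \(v\) and \(w\) are almost orthogonal. The map \(\theta\mapsto P(\gamma(\theta))\) is continuous because \(P\) is continuous on \(H^s\) by the Hardy--Littlewood--Sobolev inequality and the embedding \(H^s\hookrightarrow L^{2Nt_c/(N+\alpha)}\). Its endpoint values have opposite signs, so it vanishes at some \(\theta\), and that \(\gamma(\theta)\) lies in \(\Pcal_a\).

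\textbf{Normalized solutions lie in \(\Pcal_a\) and have \(\lambda<0\).}
This is the main technical step. We need the Pohozaev identity for \eqref{eq1.3}, which we justify using the regularity estimates of \cite{Ambrosio2025Remarks} and the integration formulas of \cite{CingolaniGalloTanaka2024}. The assumption \(s\ge\frac12\) is where these are needed. We combine:
\begin{itemize}[label={}]
\item the Pohozaev identity, obtained by testing with \(x\cdot\nabla u\);
\item the Nehari identity, obtained by testing with \(u\):
\[
\norm{(-\Delta)^{s/2}u}_2^2=\lambda a+\Dcal_{q_L}(u)+\Dcal_{t_c}(u).
\]
\end{itemize}
Equivalently, the combination is \(\frac{d}{d\sigma}J(u_\sigma)\big|_{\sigma=1}=0\). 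Since \(\Dcal_{q_L}\) is dilation invariant it drops out, and we get \(sP(u)=0\), so \(u\in\Pcal_a\).

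Substituting \(P(u)=0\), i.e. \(\norm{(-\Delta)^{s/2}u}_2^2=\frac1{t_c}\Dcal_{t_c}(u)\), into the Nehari identity gives
\[
\lambda a=\Bigl(\frac1{t_c}-1\Bigr)\Dcal_{t_c}(u)-\Dcal_{q_L}(u).
\]
Both terms on the right are strictly negative for \(u\ne0\), since \(t_c>1\) and \(\Dcal_{q_L}(u)>0\). Therefore \(\lambda<0\).
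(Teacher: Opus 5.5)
Your proof is correct and follows the paper's route: a function with $P<0$ is dilated to send $J\to-\infty$, and a well-separated multi-bump function with $P>0$ is joined to it by an $L^2$-normalized convex path to get $\Pcal_a\neq\emptyset$; the Pohozaev identity of Lemma \ref{Lem3.5}, combined with the Nehari identity as in Lemma \ref{Lem3.8}, then gives $u\in\Pcal_a$ and $\lambda<0$. The differences are minor: you use a near-optimizer for $C_{N,\alpha,s}$ and translate $v$ away from $w$ to keep the path off zero, whereas the paper uses the nonnegative optimizer $Q$ of Lemma \ref{Lem2.3} and the signs of $v,w$, so your version does not need attainment; you should also note, as the paper does, that in the fractional setting the kinetic cross terms between bumps are nonzero but of order $O_k(R^{-N-2s})$.
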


\begin{remark}
\label{Rem1.4}
In the present equation, both Choquard terms have coefficient one, and the
mass threshold is \(a_*\). If coefficients are placed in front of these terms,
the coefficient of the term with exponent \(t_c\) changes \(a_*\), whereas
the coefficient of the lower endpoint term changes only the energy infimum.
\end{remark}

\begin{remark}
\label{Rem1.5}
For \(a>a_*\), the energy is unbounded from below on \(S(a)\), so normalized
solutions cannot be obtained by direct minimization. The relation \(\Pcal_a\ne
\emptyset\) alone does not show that \(\Pcal_a\) is a natural constraint for
\(J\). The obstruction is the common scaling factor \(\sigma^{2s}\) of the
kinetic term and the Choquard term with exponent \(t_c\) under \eqref{eq1.2}. The existence of
normalized solutions for all large masses remains open.
\end{remark}

The paper is organized as follows. Section \ref{sec:setting} collects the sharp
endpoint inequalities and the differentiability properties of the constrained
functional. Section \ref{sec:threshold} gives the exact energy infimum,
the Pohozaev identity, and the nonexistence results below and at the threshold.
Section \ref{sec:large} discusses large masses and the Pohozaev constraint.
Section \ref{sec:proofs} proves the main theorems.

\section{Sharp inequalities and functional setting}
\label{sec:setting}

We write \(2_s^*=2N/(N-2s)\).
We shall use the following Hardy--Littlewood--Sobolev inequality.

\begin{lemma}\cite{Lieb1983}
\label{Lem2.1}
Let \(r,t>1\) and
\begin{equation*}
\frac1r+\frac1t+\frac{N-\alpha}{N}=2.
\end{equation*}
Then there exists \(C>0\) such that
\begin{equation*}\int_{\R^N}\int_{\R^N}
\frac{f(x)g(y)}{|x-y|^{N-\alpha}}\,dx\,dy
\le
C\norm{f}_r\norm{g}_t
\end{equation*}
for all \(f\in L^r(\R^N)\) and \(g\in L^t(\R^N)\). Consequently, \(\Dcal_p(u)\) is well defined on \(H^s(\R^N)\) for
\begin{equation*}\frac{N+\alpha}{N}
\le p\le
\frac{N+\alpha}{N-2s}.
\end{equation*}
\end{lemma}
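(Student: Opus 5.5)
The first part of Lemma \ref{Lem2.1} is Lieb's Hardy--Littlewood--Sobolev inequality from \cite{Lieb1983}, and we take it as given. The work lies in the consequence: \(\Dcal_p(u)\) is finite for every \(u\in H^s(\R^N)\) when \(\frac{N+\alpha}{N}\le p\le\frac{N+\alpha}{N-2s}\). Since \(I_\alpha=A_{N,\alpha}|x|^{-(N-\alpha)}\), we have
\[
\Dcal_p(u)=A_{N,\alpha}\int_{\R^N}\int_{\R^N}\frac{|u(x)|^p|u(y)|^p}{|x-y|^{N-\alpha}}\,dx\,dy .
\]
We apply the inequality with \(f=g=|u|^p\) and the symmetric choice \(r=t\). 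The condition \(\frac2r=2-\frac{N-\alpha}{N}=\frac{N+\alpha}{N}\) gives \(r=\frac{2N}{N+\alpha}\), and \(r>1\) because \(\alpha<N\). This yields
\[
\Dcal_p(u)\le C\,\norm{|u|^p}_{\frac{2N}{N+\alpha}}^2=C\,\norm{u}_{\frac{2Np}{N+\alpha}}^{2p}.
\]

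It then suffices to show that \(u\in L^{2Np/(N+\alpha)}(\R^N)\) for every \(u\in H^s(\R^N)\). The fractional Sobolev embedding \(H^s(\R^N)\hookrightarrow L^m(\R^N)\) holds for \(2\le m\le 2_s^*=\frac{2N}{N-2s}\); this is legitimate since \(N\ge3>2s\). The condition \(2\le\frac{2Np}{N+\alpha}\le\frac{2N}{N-2s}\) is exactly \(\frac{N+\alpha}{N}\le p\le\frac{N+\alpha}{N-2s}\). The endpoints are included: at \(p=q_L\) we use the \(L^2\) norm, and at the upper endpoint the Sobolev inequality. Hence
\[
\Dcal_p(u)\le C\norm{u}_{H^s}^{2p}<\infty .
\]

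No step is a serious obstacle. The only points needing care are the choice \(r=t\), which makes the exponent conditions line up, and checking that the admissible range for \(p\) is exactly the interpolation range \([2,2_s^*]\) between \(L^2\) and the critical Sobolev embedding. If continuity of \(\Dcal_p\) is also wanted, it follows from the same estimate applied to the bilinear form together with \(\big||a|^p-|b|^p\big|\le p(|a|^{p-1}+|b|^{p-1})|a-b|\) and Hölder's inequality.
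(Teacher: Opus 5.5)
Your proposal is correct and follows the paper's route. The paper cites \cite{Lieb1983} for the inequality itself, and it obtains the consequence exactly as in the proofs of Lemmas \ref{Lem2.2} and \ref{Lem2.5}: take \(f=g=|u|^p\) and \(r=t=2N/(N+\alpha)\) to get \(\Dcal_p(u)\le C\norm{u}_{2Np/(N+\alpha)}^{2p}\), then use \(H^s(\R^N)\hookrightarrow L^{\rho}(\R^N)\) for \(2\le\rho\le 2_s^*\). Your endpoint checks and your optional continuity remark are also sound.
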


\begin{lemma}
\label{Lem2.2}
The endpoint constant \(L_{N,\alpha}\) defined in \eqref{eq1.5} is finite, and
\begin{equation}
\label{eq2.1}\Dcal_{q_L}(u)
\le
L_{N,\alpha}\norm{u}_2^{2q_L}
\qquad
\text{for all }u\in L^2(\R^N).
\end{equation}
Moreover, the constant \(L_{N,\alpha}\) is attained.
\end{lemma}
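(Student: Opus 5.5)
The plan is to reduce the statement to the sharp Hardy--Littlewood--Sobolev inequality of Lieb, applied with the diagonal exponents \(r=t=\frac{2N}{N+\alpha}\).

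\emph{Step 1: reduction to Lemma \ref{Lem2.1}.} For \(p=q_L=\frac{N+\alpha}{N}\) set \(f=|u|^{q_L}\). Then
\[
\norm{f}_{\frac{2N}{N+\alpha}}^{\frac{2N}{N+\alpha}}=\int_{\R^N}|u|^{q_L\cdot\frac{2N}{N+\alpha}}\,dx=\int_{\R^N}|u|^{2}\,dx,
\]
so \(\norm{f}_{\frac{2N}{N+\alpha}}=\norm{u}_2^{q_L}\). The exponents satisfy \(\frac1r+\frac1t=\frac{N+\alpha}{N}=2-\frac{N-\alpha}{N}\), as Lemma \ref{Lem2.1} requires. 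Applying that lemma with \(f=g=|u|^{q_L}\) and absorbing \(A_{N,\alpha}\) gives
\[
\Dcal_{q_L}(u)\le A_{N,\alpha}C\norm{u}_2^{2q_L}.
\]
Hence the supremum in \eqref{eq1.5} is finite, and \eqref{eq2.1} holds by the definition of \(L_{N,\alpha}\) as a supremum (trivially for \(u=0\)).

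\emph{Step 2: attainment.} Here I would invoke Lieb's theorem from \cite{Lieb1983} in the diagonal case \(r=t=\frac{2N}{N+\alpha}\), where the sharp constant is attained by
\[
h(x)=c\,(1+|x|^2)^{-\frac{N+\alpha}{2}}.
\]
Since \(h\) is positive, set \(u_0=h^{1/q_L}=c'(1+|x|^2)^{-\frac N2}\). Then \(u_0\in L^2(\R^N)\), because \(|u_0|^2\sim|x|^{-2N}\) at infinity, and the identification in Step 1 shows that \(u_0\) attains \(L_{N,\alpha}\).

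If one prefers not to rely on the explicit classification of optimizers, the fallback is a concentration-compactness argument. Take a maximizing sequence \(u_n\) with \(\norm{u_n}_2=1\). Replace each \(u_n\) by the symmetric-decreasing rearrangement of \(|u_n|\), which does not decrease \(\Dcal_{q_L}\) by Riesz's rearrangement inequality. Then normalize the rearranged functions with the \(L^2\)-invariant dilation \eqref{eq1.2}, which leaves \(\Dcal_{q_L}\) unchanged.

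\emph{Main obstacle.} The difficulty in the fallback is the noncompactness created by this dilation invariance. The sequence can concentrate or vanish, since \(\Dcal_{q_L}\) is also invariant under dilations that do not preserve \(L^1\)-type quantities, and the conformal invariance of this endpoint makes it worse. Ruling this out requires Lieb's strict rearrangement and conformal arguments. For that reason I would cite \cite{Lieb1983} directly for existence and the explicit form of the maximizers, rather than reprove it.
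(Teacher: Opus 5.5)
Your proposal is correct and follows the paper's argument. Finiteness comes from applying the Hardy--Littlewood--Sobolev inequality with \(r=t=2N/(N+\alpha)\) to \(|u|^{q_L}\), using \(\norm{|u|^{q_L}}_r=\norm{u}_2^{q_L}\); attainment comes from taking \(u=F^{1/q_L}\) for a nonnegative optimizer \(F\) of Lieb's sharp inequality. Your explicit choice \(F=c(1+|x|^2)^{-(N+\alpha)/2}\) is a concrete instance of that step, which the paper does not need, and the concentration-compactness fallback is unnecessary.
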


\begin{proof}
Let \(r=2N/(N+\alpha)\). Since \(rq_L=2\), the
Hardy--Littlewood--Sobolev inequality gives
\[
\Dcal_{q_L}(u)\le C\norm{|u|^{q_L}}_r^2=C\norm{u}_2^{2q_L},
\qquad u\in L^2(\R^N).
\]
Hence \(L_{N,\alpha}<\infty\), and \eqref{eq2.1} follows from the definition
of \(L_{N,\alpha}\).

For attainment, let \(F\ge0\), \(F\not\equiv0\), be an optimizer in Lieb's
sharp Hardy--Littlewood--Sobolev inequality \cite{Lieb1983} for the exponent
\(r\). Then \(u=F^{1/q_L}\in L^2(\R^N)\) and
\[
\norm{u}_2^{2q_L}=\norm{F}_r^2,
\qquad
\Dcal_{q_L}(u)=
A_{N,\alpha}\int_{\R^N}\int_{\R^N}
\frac{F(x)F(y)}{|x-y|^{N-\alpha}}\,dx\,dy.
\]
Thus \(u\) attains \(L_{N,\alpha}\).
\end{proof}

\begin{lemma}
\label{Lem2.3}
The sharp Choquard Gagliardo--Nirenberg inequality at the \(L^2\)-critical exponent is
\begin{equation}
\label{eq2.2}\Dcal_{t_c}(u)
\le
C_{N,\alpha,s}
\norm{u}_2^{2(t_c-1)}
\norm{(-\Delta)^{s/2}u}_2^2
\qquad
\text{for all }u\in H^s(\R^N).
\end{equation}
Moreover, the sharp constant \(C_{N,\alpha,s}\) is attained by a nonnegative radially decreasing function.
\end{lemma}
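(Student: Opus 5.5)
We prove Lemma~\ref{Lem2.3} in two parts: first that the constant is finite, then that it is attained by concentration-compactness, following the Weinstein approach adapted to the Choquard setting.

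\textbf{Finiteness.} Set \(r=2N/(N+\alpha)\). Lemma~\ref{Lem2.1} gives
\[
\Dcal_{t_c}(u)\le C\norm{u}_{rt_c}^{2t_c}.
\]
Since \(2<rt_c=2(N+\alpha+2s)/(N+\alpha)<2_s^*\), the fractional Gagliardo--Nirenberg inequality applies:
\[
\norm{u}_{rt_c}\le C\norm{u}_2^{1-\theta}\norm{(-\Delta)^{s/2}u}_2^{\theta},
\qquad \theta=\frac{N}{s}\Big(\frac12-\frac1{rt_c}\Big).
\]
A direct computation gives \(\theta\cdot 2t_c=2\). This is simply the scaling identity behind the definition of \(t_c\): both \(\Dcal_{t_c}\) and the kinetic term scale like \(\sigma^{2s}\) under \eqref{eq1.2}. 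Consequently
\[
\Dcal_{t_c}(u)\le C\norm{u}_2^{2(t_c-1)}\norm{(-\Delta)^{s/2}u}_2^2,
\]
so \(C_{N,\alpha,s}<\infty\), and \eqref{eq2.2} follows from the definition of the constant.

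\textbf{Reduction of the maximizing sequence.} Let \(W(u)\) denote the quotient defining \(C_{N,\alpha,s}\). Two rearrangement facts let us restrict to nonnegative, radially symmetric, nonincreasing functions:
\begin{itemize}[leftmargin=*]
\item the Riesz rearrangement inequality gives \(\Dcal_{t_c}(u^*)\ge\Dcal_{t_c}(u)\), where \(u^*\) is the symmetric decreasing rearrangement of \(|u|\);
\item the fractional P\'olya--Szeg\H{o} inequality gives \(\norm{(-\Delta)^{s/2}u^*}_2\le\norm{(-\Delta)^{s/2}u}_2\), and \(\norm{u^*}_2=\norm{u}_2\).
\end{itemize}
Hence \(W(u^*)\ge W(u)\). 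The quotient \(W\) is invariant under \(u\mapsto\mu u(\sigma\,\cdot)\), so we may also normalize \(\norm{u_n}_2=\norm{(-\Delta)^{s/2}u_n}_2=1\). This yields a maximizing sequence \((u_n)\) of such functions, bounded in \(H^s\) and with \(\Dcal_{t_c}(u_n)\to C_{N,\alpha,s}\).

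\textbf{Compactness.} Radial nonincreasing \(L^2\) functions satisfy the pointwise bound \(|u(x)|\le C|x|^{-N/2}\norm{u}_2\). Combining this decay with the \(H^s\) bound, we obtain \(u_n\to u\) strongly in \(L^p\) for every \(2<p<2_s^*\), after passing to a subsequence with \(u_n\rightharpoonup u\) in \(H^s\) (the Strauss-type compactness for radial decreasing functions). In particular \(u_n\to u\) in \(L^{rt_c}\), so by Lemma~\ref{Lem2.1},
\[
\Dcal_{t_c}(u_n)\to\Dcal_{t_c}(u)=C_{N,\alpha,s}>0,
\]
and therefore \(u\ne0\). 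Weak lower semicontinuity gives \(\norm{u}_2\le1\) and \(\norm{(-\Delta)^{s/2}u}_2\le1\). It follows that
\[
W(u)\ge C_{N,\alpha,s},
\]
so \(u\) is a maximizer. It is nonnegative and radially nonincreasing as an a.e.\ limit of such functions.

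\textbf{Main obstacle.} The only delicate point is ruling out vanishing and dichotomy of the maximizing sequence. The argument above settles it entirely through the radial decay estimate, which is why the rearrangement inequalities are invoked first. Without them we would need Lions' concentration-compactness together with a Brezis--Lieb splitting for \(\Dcal_{t_c}\), plus the strict superadditivity coming from the exponent \(t_c>1\) in the mass factor. If a self-contained proof is preferred, we can instead cite existing sharp Choquard Gagliardo--Nirenberg results for the fractional case, e.g.\ \cite{FengHeMeng2023}.
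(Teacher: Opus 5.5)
Your proposal is correct and matches the paper's proof step for step. The paper likewise gets finiteness from Hardy--Littlewood--Sobolev plus fractional Gagliardo--Nirenberg at the exponent $2Nt_c/(N+\alpha)$, then takes a symmetrized maximizing sequence normalized by $\|u_n\|_2=\|(-\Delta)^{s/2}u_n\|_2=1$, and concludes by radial compactness in $L^{2Nt_c/(N+\alpha)}$ and lower semicontinuity; the only cosmetic difference is that you justify the radial compactness through the decay bound $|u(x)|\le C|x|^{-N/2}\|u\|_2$, whereas the paper cites the compact radial embedding.
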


\begin{proof}
Let \(\rho=2Nt_c/(N+\alpha)\). Then \(2<\rho<2_s^*\). Lemma
\ref{Lem2.1} and the fractional Gagliardo--Nirenberg inequality yield
\[
\Dcal_{t_c}(u)\le C\norm{u}_{\rho}^{2t_c}
\le
C\norm{(-\Delta)^{s/2}u}_2^2\norm{u}_2^{2(t_c-1)},
\]
because
\[
\frac{N}{s}\left(\frac12-\frac1\rho\right)=\frac1{t_c}.
\]
This gives \(C_{N,\alpha,s}<\infty\), and \eqref{eq2.2} follows from its
definition.

We prove attainment. By the homogeneity and the \(L^2\)-preserving scaling
invariance of the quotient defining \(C_{N,\alpha,s}\), choose a maximizing
sequence \(\{u_n\}\subset H^s(\R^N)\) such that
\[
\norm{u_n}_2=1,
\qquad
\norm{(-\Delta)^{s/2}u_n}_2=1,
\qquad
\Dcal_{t_c}(u_n)\to C_{N,\alpha,s}.
\]
By Schwarz symmetrization and a final normalization, using the Riesz
rearrangement inequality and the fractional P\'olya--Szeg\H{o} inequality
\cite{LiebLoss2001}, we may assume that each \(u_n\) is nonnegative and
radially decreasing. Passing to a subsequence,
\[
u_n\rightharpoonup Q \quad\text{in }H^s(\R^N).
\]
The compact radial embedding \cite{Lions1982,PalatucciPisante2014} gives
\(u_n\to Q\) strongly in \(L^\rho(\R^N)\). Hence
\(|u_n|^{t_c}\to |Q|^{t_c}\) strongly in
\(L^{2N/(N+\alpha)}(\R^N)\), and the Hardy--Littlewood--Sobolev inequality
implies \(\Dcal_{t_c}(u_n)\to\Dcal_{t_c}(Q)\). Therefore
\(\Dcal_{t_c}(Q)=C_{N,\alpha,s}\), so \(Q\not\equiv0\). Since
\(\norm{Q}_2\le1\) and \(\norm{(-\Delta)^{s/2}Q}_2\le1\), the sharp inequality
already proved implies equality in both bounds. Thus \(Q\) attains
\(C_{N,\alpha,s}\). It is nonnegative and radially decreasing by construction.
\end{proof}

\begin{lemma}
\label{Lem2.4}
For every \(u\in H^s(\R^N)\) and every \(\sigma>0\), let \(u_\sigma\) be defined by \eqref{eq1.2}. Then
\begin{equation}
\label{eq2.3}J(u_\sigma)
=
\frac{\sigma^{2s}}2
\left(
\norm{(-\Delta)^{s/2}u}_2^2
-
\frac1{t_c}\Dcal_{t_c}(u)
\right)
-
\frac1{2q_L}\Dcal_{q_L}(u).
\end{equation}
\end{lemma}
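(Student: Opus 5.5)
The plan is to compute each of the three terms of \(J\) separately under the dilation \eqref{eq1.2}, using changes of variables together with the homogeneity of the Riesz kernel \(I_\alpha\), and then to substitute the exponents \(q_L\) and \(t_c\).

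For the kinetic term we work on the Fourier side. Since \(\widehat{u_\sigma}(\xi)=\sigma^{N/2}\sigma^{-N}\hat u(\xi/\sigma)\), we have
\[
\norm{(-\Delta)^{s/2}u_\sigma}_2^2=\int_{\R^N}|\xi|^{2s}\sigma^{-N}|\hat u(\xi/\sigma)|^2\,d\xi .
\]
The substitution \(\xi=\sigma\eta\) turns this into \(\sigma^{2s}\norm{(-\Delta)^{s/2}u}_2^2\). This confirms the first scaling law quoted in the introduction.

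For the Choquard terms, write
\[
\Dcal_p(u_\sigma)=A_{N,\alpha}\iint \sigma^{Np}|u(\sigma x)|^p|u(\sigma y)|^p|x-y|^{-(N-\alpha)}\,dx\,dy .
\]
Substituting \(x'=\sigma x\) and \(y'=\sigma y\) contributes the factor \(\sigma^{-2N}\) from the Jacobian and \(\sigma^{N-\alpha}\) from the kernel. The total exponent is therefore \(Np-2N+N-\alpha=N(p-1)-\alpha\), so \(\Dcal_p(u_\sigma)=\sigma^{N(p-1)-\alpha}\Dcal_p(u)\). By Lemma \ref{Lem2.1} all of these quantities are finite for \(u\in H^s(\R^N)\) whenever \(q_L\le p\le t_c\), and \(t_c<\frac{N+\alpha}{N-2s}\) holds.

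Next we evaluate the exponents at the two critical values:
\[
N(q_L-1)-\alpha=(N+\alpha)-N-\alpha=0,\qquad N(t_c-1)-\alpha=(N+\alpha+2s)-N-\alpha=2s .
\]
Hence \(\Dcal_{q_L}(u_\sigma)=\Dcal_{q_L}(u)\) and \(\Dcal_{t_c}(u_\sigma)=\sigma^{2s}\Dcal_{t_c}(u)\). Inserting these into the definition of \(J\) and grouping the two \(\sigma^{2s}\) terms gives exactly \eqref{eq2.3}.

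There is no real obstacle here; the argument is bookkeeping. The only point requiring care is the sign of the kernel exponent in the change of variables, \(|x-y|^{-(N-\alpha)}=\sigma^{N-\alpha}|x'-y'|^{-(N-\alpha)}\).
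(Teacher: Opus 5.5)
Your proposal is correct and follows the paper's proof: substitute the scaling laws \(\norm{(-\Delta)^{s/2}u_\sigma}_2^2=\sigma^{2s}\norm{(-\Delta)^{s/2}u}_2^2\) and \(\Dcal_p(u_\sigma)=\sigma^{N(p-1)-\alpha}\Dcal_p(u)\), evaluated at the exponents \(0\) (for \(q_L\)) and \(2s\) (for \(t_c\)), into \(J\). The only difference is that you also verify these scaling laws explicitly, which the paper takes as known from the introduction, and your Fourier-side and change-of-variables computations are correct.
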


\begin{proof}
The scaling \eqref{eq1.2} preserves the \(L^2\)-norm and satisfies
\[
\norm{(-\Delta)^{s/2}u_\sigma}_2^2
=\sigma^{2s}\norm{(-\Delta)^{s/2}u}_2^2,
\qquad
\Dcal_p(u_\sigma)=\sigma^{N(p-1)-\alpha}\Dcal_p(u).
\]
Since \(N(q_L-1)-\alpha=0\) and \(N(t_c-1)-\alpha=2s\), substituting these
relations into the definition of \(J\) gives \eqref{eq2.3}.
\end{proof}

\begin{lemma}
\label{Lem2.5}
The functional \(J\) is of class \(C^1\) on \(H^s(\R^N)\), and its restriction
to \(S(a)\) is of class \(C^1\) on the Hilbert manifold \(S(a)\). If
\(u\in S(a)\) is a critical point of \(J|_{S(a)}\), then there exists
\(\lambda\in\R\) such that \((u,\lambda)\) solves \eqref{eq1.3} weakly.
\end{lemma}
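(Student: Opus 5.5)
The plan is to treat the three terms of \(J\) separately. For \(p\in\{q_L,t_c\}\) I will show that \(\Dcal_p\) is \(C^1\) on \(H^s(\R^N)\) with derivative
\[
\langle \Dcal_p'(u),\varphi\rangle = 2p\int_{\R^N}\big(I_\alpha*|u|^p\big)|u|^{p-2}u\,\varphi\,dx .
\]
The kinetic part \(\frac12\norm{(-\Delta)^{s/2}u}_2^2\) is a continuous quadratic form, so it is smooth. For the Choquard terms, write \(r=2N/(N+\alpha)\). The map \(u\mapsto |u|^p\) is \(C^1\) from \(L^{pr}\) into \(L^{r}\), since \(p\ge q_L>1\). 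By Lemma \ref{Lem2.1}, the bilinear form \((f,g)\mapsto\int (I_\alpha*f)g\) is continuous on \(L^r\times L^r\). Composition with this bilinear form gives the derivative above.

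For \(p=q_L\) we have \(pr=2\), so \(H^s\hookrightarrow L^2\) suffices. For \(p=t_c\) we have \(pr=\rho\in(2,2_s^*)\), and \(H^s\hookrightarrow L^\rho\) is continuous. In both cases the relevant Lebesgue exponent is controlled by the \(H^s\) norm, which gives the \(C^1\) property on \(H^s(\R^N)\).

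The step I expect to need the most care is continuity of the derivative. It requires continuity of the Nemytskii map \(u\mapsto |u|^{p-2}u\) from \(L^{pr}\) to \(L^{pr/(p-1)}\). I will prove it by the standard argument: from a convergent sequence extract an a.e.-convergent subsequence with an \(L^{pr}\) dominating function, then apply dominated convergence. Combining this with Hölder's inequality and HLS for the pairing of \(I_\alpha*|u|^p\in L^{r'}\)-type factors gives \(\Dcal_p'(u_n)\to\Dcal_p'(u)\) in \(H^{-s}\). Here \(r'\) is the dual exponent determined by HLS. Equivalently, \(I_\alpha*|u|^p\in L^{2N/(N-\alpha)}\) by HLS, and this pairs correctly with \(|u|^{p-2}u\,\varphi\) by Hölder.

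For the manifold statement, note that \(G(u)=\norm{u}_2^2-a\) is smooth on \(H^s\) with \(G'(u)=2u\ne0\) on \(S(a)\). Hence \(S(a)\) is a \(C^\infty\) Hilbert submanifold of codimension one, and the restriction \(J|_{S(a)}\) is \(C^1\). Finally, at a critical point \(u\) of the restriction, \(J'(u)\) vanishes on \(T_uS(a)=\{\varphi:\int u\varphi=0\}\). The Lagrange multiplier rule therefore gives \(\mu\in\R\) with \(J'(u)=\mu\, G'(u)\).

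Setting \(\lambda=2\mu\) and using the weak form of the fractional Laplacian, this identity reads
\[
\int (-\Delta)^{s/2}u\,(-\Delta)^{s/2}\varphi-\lambda\int u\varphi-\sum_{p\in\{q_L,t_c\}}\int (I_\alpha*|u|^p)|u|^{p-2}u\varphi=0 .
\]
Since this holds for all \(\varphi\in H^s\), the pair \((u,\lambda)\) is a weak solution of \eqref{eq1.3}.
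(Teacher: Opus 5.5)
Your proposal is correct and follows essentially the same route as the paper. Both arguments combine three ingredients: the $C^1$ Nemytskii map $u\mapsto|u|^p$ from $L^{2Np/(N+\alpha)}$ into $L^{2N/(N+\alpha)}$, the Hardy--Littlewood--Sobolev inequality with the embedding $H^s\hookrightarrow L^{2Np/(N+\alpha)}$, and the Lagrange multiplier rule on the codimension-one submanifold $S(a)$. You add detail the paper omits, namely the continuity of the derivative and the submersion $G(u)=\norm{u}_2^2-a$, and your normalization $\lambda=2\mu$ yields exactly the weak form of \eqref{eq1.3}.
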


\begin{proof}
Let \(p\in\{q_L,t_c\}\) and
\[
\rho_p=\frac{2Np}{N+\alpha}.
\]
Then \(2\le\rho_p\le 2_s^*\), and hence
\[
H^s(\R^N)\hookrightarrow L^{\rho_p}(\R^N).
\]
Since the map
\[
u\mapsto |u|^p
\]
is \(C^1\) from \(L^{\rho_p}(\R^N)\) into
\(L^{\frac{2N}{N+\alpha}}(\R^N)\), the Hardy--Littlewood--Sobolev inequality
implies that
\[
u\mapsto \Dcal_p(u)
\]
is \(C^1\) on \(H^s(\R^N)\), with
\[
\langle \Dcal_p'(u),v\rangle
=
2p
\int_{\R^N}
\big(I_\alpha*|u|^p\big)|u|^{p-2}uv\,dx .
\]
Therefore \(J\in C^1(H^s(\R^N),\R)\), and
\[
\begin{aligned}
\langle J'(u),v\rangle
&=
\int_{\R^N}
(-\Delta)^{s/2}u
(-\Delta)^{s/2}v\,dx  \\
&\quad
-
\int_{\R^N}
\big(I_\alpha*|u|^{q_L}\big)|u|^{q_L-2}uv\,dx \\
&\quad
-
\int_{\R^N}
\big(I_\alpha*|u|^{t_c}\big)|u|^{t_c-2}uv\,dx .
\end{aligned}
\]
Since \(S(a)\) is a \(C^1\) Hilbert submanifold of \(H^s(\R^N)\), the restriction
\(J|_{S(a)}\) is \(C^1\).

If \(u\in S(a)\) is a critical point of \(J|_{S(a)}\), then by the Lagrange
multiplier rule there exists \(\lambda\in\R\) such that
\[
\langle J'(u),v\rangle
=
\lambda\int_{\R^N}uv\,dx
\qquad
\text{for all }v\in H^s(\R^N).
\]
Equivalently,
\[
\begin{aligned}
\int_{\R^N}
(-\Delta)^{s/2}u
(-\Delta)^{s/2}v\,dx
&=
\lambda\int_{\R^N}uv\,dx \\
&\quad+
\int_{\R^N}
\big(I_\alpha*|u|^{q_L}\big)|u|^{q_L-2}uv\,dx \\
&\quad+
\int_{\R^N}
\big(I_\alpha*|u|^{t_c}\big)|u|^{t_c-2}uv\,dx
\end{aligned}
\]
for all \(v\in H^s(\R^N)\). This is the weak formulation of \eqref{eq1.3}.
\end{proof}

\section{Sharp threshold and nonexistence for small and critical masses}
\label{sec:threshold}

For \(a>0\), set
\begin{equation*}
m(a)=\inf_{u\in S(a)}J(u).
\end{equation*}
Throughout this section we use
\begin{equation*}
a_*=
\left(
\frac{t_c}{C_{N,\alpha,s}}
\right)^{\frac1{t_c-1}}.
\end{equation*}

\begin{proposition}
\label{Pro3.1}
If \(0<a\le a_*\), then
\begin{equation}
\label{eq3.1}J(u)
\ge
\frac12
\left(
1-
\frac{C_{N,\alpha,s}}{t_c}a^{t_c-1}
\right)
\norm{(-\Delta)^{s/2}u}_2^2
-
\frac1{2q_L}L_{N,\alpha}a^{q_L}
\end{equation}
for all \(u\in S(a)\). In particular,
\begin{equation}
\label{eq3.2}m(a)
\ge
-
\frac1{2q_L}L_{N,\alpha}a^{q_L}.
\end{equation}
\end{proposition}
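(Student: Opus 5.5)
The plan is to bound the two Choquard terms in \(J(u)\) separately by the sharp inequalities of Section \ref{sec:setting}, and then use the constraint \(\norm{u}_2^2=a\). Fix \(u\in S(a)\).

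For the lower endpoint term, Lemma \ref{Lem2.2} gives
\[
\Dcal_{q_L}(u)\le L_{N,\alpha}\norm{u}_2^{2q_L}=L_{N,\alpha}a^{q_L}.
\]
For the \(L^2\)-critical term, the sharp Choquard Gagliardo--Nirenberg inequality \eqref{eq2.2} of Lemma \ref{Lem2.3} gives
\[
\Dcal_{t_c}(u)\le C_{N,\alpha,s}\,a^{t_c-1}\norm{(-\Delta)^{s/2}u}_2^2,
\]
since \(\norm{u}_2^{2(t_c-1)}=a^{t_c-1}\). Substituting both bounds into the definition of \(J\) yields
\[
J(u)\ge\frac12\norm{(-\Delta)^{s/2}u}_2^2-\frac{C_{N,\alpha,s}}{2t_c}a^{t_c-1}\norm{(-\Delta)^{s/2}u}_2^2-\frac1{2q_L}L_{N,\alpha}a^{q_L}.
\]
This is exactly \eqref{eq3.1}.

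For \eqref{eq3.2}, I use that \(a\le a_*\) with \(a_*=(t_c/C_{N,\alpha,s})^{1/(t_c-1)}\). Since \(t_c-1>0\), this gives \(a^{t_c-1}\le t_c/C_{N,\alpha,s}\), so the coefficient \(1-\frac{C_{N,\alpha,s}}{t_c}a^{t_c-1}\) is nonnegative. The kinetic contribution in \eqref{eq3.1} can therefore be dropped, and
\[
J(u)\ge -\frac1{2q_L}L_{N,\alpha}a^{q_L}\qquad\text{for every }u\in S(a).
\]
Taking the infimum over \(S(a)\) gives \eqref{eq3.2}.

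No step is a real obstacle. The one point to check is consistency of the exponent in \(a_*\): the introduction writes \(N/(2s+\alpha)\), while this section writes \(1/(t_c-1)\). These agree because \(t_c-1=(2s+\alpha)/N\), so the same threshold is meant in both places.
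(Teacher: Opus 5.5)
Your proposal is correct and follows the paper's proof essentially line for line: bound \(\Dcal_{q_L}(u)\) by Lemma 2.2 and \(\Dcal_{t_c}(u)\) by \eqref{eq2.2}, substitute into \(J\), then use \(a\le a_*\) to make the kinetic coefficient nonnegative and take the infimum. Your check that the two expressions for \(a_*\) agree, via \(t_c-1=(2s+\alpha)/N\), is also correct.
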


\begin{proof}
Let \(u\in S(a)\). By Lemma \ref{Lem2.2},
\[
\Dcal_{q_L}(u)
\le
L_{N,\alpha}a^{q_L}.
\]
By Lemma \ref{Lem2.3},
\[
\Dcal_{t_c}(u)
\le
C_{N,\alpha,s}a^{t_c-1}
\norm{(-\Delta)^{s/2}u}_2^2.
\]
Therefore
\[
\begin{aligned}
J(u)
&=
\frac12\norm{(-\Delta)^{s/2}u}_2^2
-\frac1{2q_L}\Dcal_{q_L}(u)
-\frac1{2t_c}\Dcal_{t_c}(u)  \\
&\ge
\frac12
\left(
1-
\frac{C_{N,\alpha,s}}{t_c}a^{t_c-1}
\right)
\norm{(-\Delta)^{s/2}u}_2^2
-
\frac1{2q_L}L_{N,\alpha}a^{q_L}.
\end{aligned}
\]
This proves \eqref{eq3.1}. Since \(a\le a_*\), the coefficient of the kinetic
term is nonnegative, and taking the infimum over \(S(a)\) gives \eqref{eq3.2}.
\end{proof}

\begin{lemma}
\label{Lem3.2}
For every \(a>0\),
\begin{equation*}
m(a)
\le
-
\frac1{2q_L}L_{N,\alpha}a^{q_L}.
\end{equation*}
\end{lemma}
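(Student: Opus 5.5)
The plan is to test $J$ on dilations of an optimizer for the lower endpoint inequality and let the dilation parameter go to zero. By Lemma \ref{Lem2.2}, $L_{N,\alpha}$ is attained by some $u_0 = F^{1/q_L}$, with $F$ a Lieb optimizer. Lieb's optimizers are explicit, $F(x)=c(1+|x|^2)^{-(N+\alpha)/2}$ up to translation and dilation. Hence $u_0(x)=c'(1+|x|^2)^{-N/2}$ up to these symmetries. After multiplying by a constant (the quotient in \eqref{eq1.5} is $0$-homogeneous), we may assume $\norm{u_0}_2^2=a$. Then $\Dcal_{q_L}(u_0)=L_{N,\alpha}a^{q_L}$.

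The first point to verify is $u_0\in H^s(\R^N)$. The function $(1+|x|^2)^{-N/2}$ is smooth, with all derivatives bounded and decaying at least like $|x|^{-N}$, so it lies in $H^1(\R^N)\subset H^s(\R^N)$. This uses the explicit form; if one only knew $u_0\in L^2$, an approximation argument would be needed instead (see below). Since $u_0\in H^s$, both $\norm{(-\Delta)^{s/2}u_0}_2^2$ and $\Dcal_{t_c}(u_0)$ are finite, by Lemma \ref{Lem2.1} or Lemma \ref{Lem2.3}.

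Next apply Lemma \ref{Lem2.4} to $u_\sigma$ defined by \eqref{eq1.2}. Each $u_\sigma$ lies in $S(a)$, and
\[
J(u_\sigma)=\frac{\sigma^{2s}}2\Big(\norm{(-\Delta)^{s/2}u_0}_2^2-\frac1{t_c}\Dcal_{t_c}(u_0)\Big)-\frac1{2q_L}L_{N,\alpha}a^{q_L}.
\]
Letting $\sigma\to0^+$ gives $m(a)\le\lim_{\sigma\to0}J(u_\sigma)=-\frac1{2q_L}L_{N,\alpha}a^{q_L}$. This holds for every $a>0$, with no restriction relative to $a_*$.

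The only delicate step is ensuring the optimizer lies in $H^s$. As a fallback avoiding the classification, take any $u_0$ attaining $L_{N,\alpha}$ in $L^2$ and approximate it in $L^2$ by $v_n\in C_c^\infty$ normalized to $\norm{v_n}_2^2=a$. The map $u\mapsto\Dcal_{q_L}(u)$ is continuous on $L^2$, because $u\mapsto|u|^{q_L}$ is continuous from $L^2$ into $L^{2N/(N+\alpha)}$, combined with Lemma \ref{Lem2.1}. Hence $\Dcal_{q_L}(v_n)\to L_{N,\alpha}a^{q_L}$. Applying the scaling argument to each $v_n$ gives $m(a)\le-\frac1{2q_L}\Dcal_{q_L}(v_n)$, and letting $n\to\infty$ yields the same bound.
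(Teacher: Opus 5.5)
Your proof is correct and is essentially the paper's proof. The paper's argument is exactly your fallback: approximate an \(L^2\)-optimizer of \(L_{N,\alpha}\) by normalized \(C_c^\infty\) functions, use the continuity of \(\Dcal_{q_L}\) on \(L^2\), and let \(\sigma\to0^+\) via Lemma~\ref{Lem2.4}; this avoids any appeal to the explicit form of Lieb's optimizers. Your primary route is also valid, since \(\left(1+|x|^2\right)^{-N/2}\in H^1(\R^N)\subset H^s(\R^N)\); it only replaces the density step by Lieb's classification of the Hardy--Littlewood--Sobolev optimizers.
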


\begin{proof}
By Lemma \ref{Lem2.2}, the sharp lower endpoint constant \(L_{N,\alpha}\) is attained by some \(U\in L^2(\R^N)\), \(U\not\equiv0\). Replacing \(U\) by
\[
\frac{\sqrt a\,U}{\norm{U}_2},
\]
we may assume
\[
\norm{U}_2^2=a
\]
and
\[
\Dcal_{q_L}(U)=L_{N,\alpha}a^{q_L}.
\]

Since \(C_c^\infty(\R^N)\) is dense in \(L^2(\R^N)\), there exists a sequence
\[
\varphi_n\in C_c^\infty(\R^N)
\]
such that
\[
\varphi_n\to U
\quad\text{in }L^2(\R^N).
\]
Set
\[
v_n=
\frac{\sqrt a\,\varphi_n}{\norm{\varphi_n}_2}.
\]
Then
\[
v_n\in C_c^\infty(\R^N)\cap S(a),
\qquad
v_n\to U
\quad\text{in }L^2(\R^N).
\]
By the Hardy--Littlewood--Sobolev inequality,
\[
\Dcal_{q_L}(v_n)\to \Dcal_{q_L}(U)
=
L_{N,\alpha}a^{q_L}.
\]

For \(\sigma>0\), define
\[
(v_n)_\sigma(x)=\sigma^{\frac N2}v_n(\sigma x).
\]
Then \((v_n)_\sigma\in S(a)\). By Lemma \ref{Lem2.4},
\[
J((v_n)_\sigma)
=
\frac{\sigma^{2s}}2
\left(
\norm{(-\Delta)^{s/2}v_n}_2^2
-
\frac1{t_c}\Dcal_{t_c}(v_n)
\right)
-
\frac1{2q_L}\Dcal_{q_L}(v_n).
\]
For each fixed \(n\), letting \(\sigma\to0^+\), we get
\[
\lim_{\sigma\to0^+}J((v_n)_\sigma)
=
-\frac1{2q_L}\Dcal_{q_L}(v_n).
\]
Hence
\[
m(a)
\le
-\frac1{2q_L}\Dcal_{q_L}(v_n)
\]
for every \(n\). Passing to the limit as \(n\to\infty\), we obtain
\[
m(a)
\le
-\frac1{2q_L}L_{N,\alpha}a^{q_L}.
\]
\end{proof}

\begin{proposition}
\label{Pro3.3}
For every \(0<a\le a_*\),
\begin{equation}
\label{eq3.3}m(a)
=
-
\frac1{2q_L}L_{N,\alpha}a^{q_L}.
\end{equation}
If \(0<a<a_*\), then \(m(a)\) is not attained. If \(a=a_*\), every minimizer must attain equality in both sharp inequalities \eqref{eq2.1} and \eqref{eq2.2}.
\end{proposition}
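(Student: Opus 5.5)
The identity \eqref{eq3.3} follows immediately from two earlier results. Proposition \ref{Pro3.1} gives the lower bound \eqref{eq3.2} for $0<a\le a_*$. Lemma \ref{Lem3.2} gives the matching upper bound for every $a>0$. So the work lies in the two statements about minimizers. For these I will use the pointwise inequality \eqref{eq3.1} together with the exact value of $m(a)$.

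\emph{Case $0<a<a_*$.} Suppose $u\in S(a)$ attains $m(a)$. By \eqref{eq3.1} and \eqref{eq3.3},
\[
-\frac1{2q_L}L_{N,\alpha}a^{q_L}=J(u)\ge \frac12\Bigl(1-\frac{C_{N,\alpha,s}}{t_c}a^{t_c-1}\Bigr)\norm{(-\Delta)^{s/2}u}_2^2-\frac1{2q_L}L_{N,\alpha}a^{q_L}.
\]
Since $a<a_*$, the coefficient $1-\frac{C_{N,\alpha,s}}{t_c}a^{t_c-1}$ is strictly positive. Hence $\norm{(-\Delta)^{s/2}u}_2=0$. For $u\in H^s(\R^N)$ this means $\hat u=0$ a.e., by Plancherel, so $u=0$. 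This contradicts $\norm{u}_2^2=a>0$, and therefore $m(a)$ is not attained.

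\emph{Case $a=a_*$.} Here the kinetic coefficient in \eqref{eq3.1} vanishes, so I cannot argue as above. Instead I will go back to the chain of inequalities in the proof of Proposition \ref{Pro3.1}. For a minimizer $u\in S(a_*)$, write
\[
J(u)+\frac1{2q_L}L_{N,\alpha}a_*^{q_L}=\frac1{2q_L}\bigl(L_{N,\alpha}a_*^{q_L}-\Dcal_{q_L}(u)\bigr)+\frac1{2t_c}\bigl(C_{N,\alpha,s}a_*^{t_c-1}\norm{(-\Delta)^{s/2}u}_2^2-\Dcal_{t_c}(u)\bigr).
\]
This identity uses $\frac{C_{N,\alpha,s}}{t_c}a_*^{t_c-1}=1$, so that the kinetic term $\frac12\norm{(-\Delta)^{s/2}u}_2^2$ equals $\frac1{2t_c}C_{N,\alpha,s}a_*^{t_c-1}\norm{(-\Delta)^{s/2}u}_2^2$. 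The left-hand side is zero because $u$ is a minimizer and $m(a_*)$ has the value \eqref{eq3.3}. Each bracket on the right is nonnegative, by \eqref{eq2.1} and \eqref{eq2.2} respectively. Hence both brackets vanish. Since $\norm{u}_2^2=a_*$, this says exactly that $u$ attains equality in \eqref{eq2.1} and in \eqref{eq2.2}.

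There is no real obstacle here. The only points that need care are the use of the exact value $a_*^{t_c-1}=t_c/C_{N,\alpha,s}$ to rewrite the kinetic term, and the fact that a vanishing Gagliardo seminorm forces $u=0$ in $H^s$.
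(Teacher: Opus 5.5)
Your proof is correct and follows the paper's argument: you get the value of $m(a)$ from Proposition~\ref{Pro3.1} and Lemma~\ref{Lem3.2}, you kill the kinetic term through the strictly positive coefficient in \eqref{eq3.1} when $a<a_*$, and at $a=a_*$ you show that the two nonnegative defects in $J(u)+\frac1{2q_L}L_{N,\alpha}a_*^{q_L}$ must both vanish. The only differences are cosmetic: you use Plancherel instead of ``$u$ is constant and lies in $L^2$'', and you combine the paper's two separate inequalities into a single identity.
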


\begin{proof}
The identity \eqref{eq3.3} follows from Proposition \ref{Pro3.1} and Lemma \ref{Lem3.2}.

Assume first that \(0<a<a_*\), and suppose, for a contradiction, that \(m(a)\) is attained by some \(u\in S(a)\). Then
\[
J(u)=m(a)
=
-\frac1{2q_L}L_{N,\alpha}a^{q_L}.
\]
From the proof of Proposition \ref{Pro3.1}, we have
\[
J(u)
\ge
\frac12
\left(
1-
\frac{C_{N,\alpha,s}}{t_c}a^{t_c-1}
\right)
\norm{(-\Delta)^{s/2}u}_2^2
-
\frac1{2q_L}L_{N,\alpha}a^{q_L}.
\]
Since \(0<a<a_*\),
\[
1-
\frac{C_{N,\alpha,s}}{t_c}a^{t_c-1}>0.
\]
Therefore
\[
\norm{(-\Delta)^{s/2}u}_2^2=0.
\]
Thus \(u\) is constant in \(\R^N\). Since \(u\in L^2(\R^N)\), it follows that \(u=0\), contradicting
\[
\norm{u}_2^2=a>0.
\]
Hence \(m(a)\) is not attained for \(0<a<a_*\).

Now let \(a=a_*\), and assume that \(u\in S(a_*)\) is a minimizer. Then
\[
J(u)
=
-\frac1{2q_L}L_{N,\alpha}a_*^{q_L}.
\]
On the other hand,
\[
J(u)
=
\frac12\norm{(-\Delta)^{s/2}u}_2^2
-\frac1{2t_c}\Dcal_{t_c}(u)
-\frac1{2q_L}\Dcal_{q_L}(u).
\]
By Lemmas \ref{Lem2.2} and \ref{Lem2.3},
\[
\Dcal_{q_L}(u)
\le
L_{N,\alpha}a_*^{q_L}
\]
and
\[
\Dcal_{t_c}(u)
\le
C_{N,\alpha,s}a_*^{t_c-1}
\norm{(-\Delta)^{s/2}u}_2^2
=
t_c\norm{(-\Delta)^{s/2}u}_2^2.
\]
Consequently,
\[
\frac12\norm{(-\Delta)^{s/2}u}_2^2
-\frac1{2t_c}\Dcal_{t_c}(u)
\ge0
\]
and
\[
-\frac1{2q_L}\Dcal_{q_L}(u)
\ge
-\frac1{2q_L}L_{N,\alpha}a_*^{q_L}.
\]
Since their sum is exactly
\[
-\frac1{2q_L}L_{N,\alpha}a_*^{q_L},
\]
both inequalities must be equalities. Therefore
\[
\Dcal_{q_L}(u)
=
L_{N,\alpha}a_*^{q_L}
\]
and
\[
\Dcal_{t_c}(u)
=
C_{N,\alpha,s}a_*^{t_c-1}
\norm{(-\Delta)^{s/2}u}_2^2.
\]
Thus \(u\) attains equality in both sharp inequalities \eqref{eq2.1} and \eqref{eq2.2}.
\end{proof}

\begin{remark}
\label{Rem3.4}
By Proposition \ref{Pro3.3}, any minimizer at \(a=a_*\) must optimize both
the lower endpoint Hardy--Littlewood--Sobolev inequality and the sharp
Choquard Gagliardo--Nirenberg inequality. To prove nonattainment, it therefore
suffices to show that no such common optimizer exists.
\end{remark}

\subsection{The Pohozaev identity and nonexistence below the critical mass}
In this subsection, by a normalized solution of \eqref{eq1.3} with mass \(a\), we mean a weak solution
\(u\in S(a)\) obtained as a critical point of \(J|_{S(a)}\).

\begin{lemma}
\label{Lem3.5}
Let \(u\in S(a)\) be a normalized solution of \eqref{eq1.3}. Then
\begin{equation*}
P(u)=0,
\end{equation*}
where
\begin{equation*}
P(u)
=
\norm{(-\Delta)^{s/2}u}_2^2
-
\frac1{t_c}\Dcal_{t_c}(u).
\end{equation*}
Equivalently,
\begin{equation}
\label{eq3.4}\norm{(-\Delta)^{s/2}u}_2^2
=
\frac1{t_c}\Dcal_{t_c}(u).
\end{equation}
\end{lemma}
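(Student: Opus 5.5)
The plan is to combine two identities satisfied by a normalized solution \(u\) with multiplier \(\lambda\): the Nehari-type identity, obtained by testing the weak equation with \(v=u\), and the Pohozaev identity, obtained by testing formally with \(x\cdot\nabla u\). Testing with \(u\) in the weak formulation of Lemma \ref{Lem2.5} gives
\[
\norm{(-\Delta)^{s/2}u}_2^2=\lambda a+\Dcal_{q_L}(u)+\Dcal_{t_c}(u).
\]
The Pohozaev identity for \eqref{eq1.3} reads
\[
\frac{N-2s}{2}\norm{(-\Delta)^{s/2}u}_2^2
=\frac{N}{2}\lambda a+\frac{N+\alpha}{2q_L}\Dcal_{q_L}(u)+\frac{N+\alpha}{2t_c}\Dcal_{t_c}(u).
\]
It uses the standard identities \(\int(-\Delta)^s u\,(x\cdot\nabla u)=\frac{2s-N}{2}\norm{(-\Delta)^{s/2}u}_2^2\), \(\int u\,(x\cdot\nabla u)=-\frac N2\norm{u}_2^2\), and \(\int (I_\alpha*|u|^p)|u|^{p-2}u\,(x\cdot\nabla u)=-\frac{N+\alpha}{2p}\Dcal_p(u)\).

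To eliminate \(\lambda\), I multiply the Nehari identity by \(N/2\) and subtract the Pohozaev identity. The result is
\[
s\norm{(-\Delta)^{s/2}u}_2^2=\Big(\tfrac N2-\tfrac{N+\alpha}{2q_L}\Big)\Dcal_{q_L}(u)+\Big(\tfrac N2-\tfrac{N+\alpha}{2t_c}\Big)\Dcal_{t_c}(u).
\]
Since \(q_L=(N+\alpha)/N\), the first coefficient vanishes. For the second coefficient, \(N t_c-(N+\alpha)=2s\) gives \(\tfrac{Nt_c-N-\alpha}{2t_c}=\tfrac{s}{t_c}\). Dividing by \(s\) yields exactly \eqref{eq3.4}, i.e.\ \(P(u)=0\). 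This is consistent with Lemma \ref{Lem2.4}: \(P(u)\) is, up to a factor, \(\frac{d}{d\sigma}J(u_\sigma)|_{\sigma=1}\), and the \(q_L\)-term does not move under the scaling.

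The main obstacle is justifying the Pohozaev identity rigorously, since \(x\cdot\nabla u\) is not an admissible test function in \(H^s\). I would first establish regularity and decay of \(u\). Using the regularity estimates of Ambrosio \cite{Ambrosio2025Remarks}, I would show that \(u\in L^\infty\), that the Riesz potential terms \(I_\alpha*|u|^p\) are bounded (via HLS and bootstrap, with \(s\ge\frac12\) giving enough smoothness), and that \(u\in H^{2s}\) or \(C^{1,\beta}\) locally with suitable integrability of \(x\cdot\nabla u\). I would then apply the integration formulas of Cingolani, Gallo and Tanaka \cite{CingolaniGalloTanaka2024}. For the nonlocal terms this means testing with \(\psi_R(x)\,x\cdot\nabla u\), where \(\psi_R\) is a cutoff, and passing \(R\to\infty\) by dominated convergence. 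For the fractional Laplacian term I would use the extension or commutator formula \([(-\Delta)^s,x\cdot\nabla]=2s(-\Delta)^s\).

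An alternative that avoids the derivative test is to compute \(\frac{d}{d\sigma}J(u_\sigma)|_{\sigma=1}=0\) directly. This derivative would vanish if \(u\) were a critical point of \(J|_{S(a)}\) along the curve \(\sigma\mapsto u_\sigma\subset S(a)\), since \(\sigma\mapsto u_\sigma\) is \(C^1\) into \(H^s\) with tangent \(\frac N2u+x\cdot\nabla u\). However, this tangent vector lies in \(H^s\) only under the same regularity assumption, so the regularity step remains the crux either way. I would present the Nehari–Pohozaev elimination as the main line and cite the cited works for the justification.
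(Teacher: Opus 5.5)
Your proposal is correct and follows essentially the same route as the paper: the Nehari identity, then the Pohozaev identity with exactly your coefficients, and finally elimination of \(\lambda\) by multiplying the Nehari identity by \(N/2\) and subtracting. The paper justifies the Pohozaev identity as you plan, via Ambrosio's estimates to get \(u\in L^\infty\cap C^{1,\beta}\) with \(1+\beta>2s\), then the Cingolani--Gallo--Tanaka formulas with cutoff fields \(\chi(x/R)x\) and dominated convergence; the only cosmetic difference is that it also treats the kinetic term by the same integration formula rather than by a commutator or extension argument.
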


\begin{proof}
We first establish the regularity needed for the Pohozaev identity. For
\(p\in\{q_L,t_c\}\), put
\[
V_p=I_\alpha*|u|^p,
\qquad
f_p(z)=|z|^{p-2}z,
\qquad
m=\frac{2N}{N+\alpha},
\qquad
m'=\frac{2N}{N-\alpha}.
\]
Since \(pm\in[2,2_s^*]\), the Sobolev and
Hardy--Littlewood--Sobolev inequalities give
\[
|u|^p\in L^m(\R^N),
\qquad
V_p\in L^{m'}(\R^N),
\qquad
\Dcal_p(u)<\infty.
\]
Moreover, \(V_{q_L}+V_{t_c}>0\) a.e., because \(u\ne0\).
Define the measurable coefficients
\[
H=f_{q_L}(u)+f_{t_c}(u),
\qquad
K=\frac{V_{q_L}f_{q_L}(u)+V_{t_c}f_{t_c}(u)}
{V_{q_L}+V_{t_c}}.
\]
They satisfy
\[
Hu=|u|^{q_L}+|u|^{t_c},
\qquad
(I_\alpha*(Hu))K=V_{q_L}f_{q_L}(u)+V_{t_c}f_{t_c}(u),
\]
and
\[
|H|+|K|\le2\bigl(|u|^{q_L-1}+|u|^{t_c-1}\bigr).
\]
The identities
\[
(q_L-1)\frac{2N}{\alpha}=2,
\qquad
(t_c-1)\frac{2N}{\alpha+2s}=2
\]
imply that
\[
H,K\in L^{\frac{2N}{\alpha}}(\R^N)
+L^{\frac{2N}{\alpha+2s}}(\R^N).
\]
Thus \eqref{eq1.3} can be written as
\[
(-\Delta)^su+u=(I_\alpha*(Hu))K+(\lambda+1)u.
\]
Applying \cite[Proposition~3]{Ambrosio2025Remarks}, with positive linear
coefficient \(1\) and bounded potential \(\lambda+1\), we obtain
\[
u\in L^r(\R^N)
\qquad\text{for every }2\le r<\frac N\alpha\,2_s^*.
\]
Since \(t_c<2_s^*\), choose
\[
\max\left\{2,\frac{Nt_c}{\alpha}\right\}<r<\frac N\alpha\,2_s^*.
\]
Then \(r/p>N/\alpha\) for both powers. Splitting the Riesz kernel into
its restrictions to \(B_1\) and \(\R^N\setminus B_1\), H\"older's
inequality yields
\[
\norm{V_p}_\infty
\le C\bigl(\norm{|u|^p}_{r/p}+\norm{|u|^p}_m\bigr)<\infty.
\]
Indeed, the kernel belongs to \(L^{(r/p)'}\) on the unit ball and to
\(L^{m'}\) on its complement.

Write \((-\Delta)^su+u=g\), where
\[
g=(\lambda+1)u+\sum_{p\in\{q_L,t_c\}}V_pf_p(u).
\]
For \(b=\max\{1,t_c-1\}\), we have
\[
|g|\le C(1+|u|^b),
\qquad
1\le b<2_s^*-1.
\]
Let \(\mathcal B_s\) be the positive convolution kernel of
\( ((-\Delta)^s+1)^{-1}\). It satisfies
\[
\norm{\mathcal B_s}_1=1,
\qquad
\mathcal B_s\in L^\ell(\R^N)
\quad\text{for }1\le\ell<\frac{N}{N-2s}.
\]
The resolvent comparison used in the proof of
\cite[Proposition~4]{Ambrosio2025Remarks}, obtained from the distributional
Kato inequality, gives
\[
|u|\le\mathcal B_s*|g|
\le C+C\mathcal B_s*(|u|^b\mathbf1_E),
\qquad E=\{x:|u(x)|>1\}.
\]
The set \(E\) has finite measure. Starting with \(r_0=2_s^*\), Young's
inequality improves \(u\mathbf1_E\in L^r\) to
\(u\mathbf1_E\in L^{r_1}\) whenever
\[
0<\frac1{r_1}<\frac1r,
\qquad
\frac1{r_1}>\frac br-\frac{2s}{N}.
\]
This iteration reaches an exponent satisfying \(r/b>N/(2s)\) after
finitely many steps. In fact,
\[
\delta=\frac{2s}{N}-\frac{b-1}{r_0}>0,
\]
and, as long as \(b/r\ge2s/N\), the lower bound for \(1/r_1\) is at
most \(1/r-\delta\). The reciprocal exponent can therefore be decreased
by \(\delta/2\) at each such step. Once \(r/b>N/(2s)\),
\(\mathcal B_s\in L^{(r/b)'}\), and the preceding comparison implies
\(u\in L^\infty(\R^N)\).

Since \(s\ge1/2\), the Schauder estimates in
\cite[Proposition~5]{Ambrosio2025Remarks} first give
\(u\in C^{0,1/2}(\R^N)\). The potentials \(V_p\) have the same
H\"older regularity. To see this, choose
\(\eta\in C_c^\infty(\R^N)\), equal to one near the origin, and write
\[
V_p=(\eta I_\alpha)*|u|^p+((1-\eta)I_\alpha)*|u|^p.
\]
The first convolution has the same H\"older regularity because
\(\eta I_\alpha\in L^1\) and \(|u|^p\in C^{0,1/2}\). The second convolution
belongs to \(W^{1,\infty}\), since the kernel away from the origin and its first
derivatives lie in \(L^{m'}\), while \(|u|^p\in L^m\).
Both \(f_{q_L}\) and \(f_{t_c}\) are \(\alpha/N\)-H\"older
continuous on bounded intervals. Hence the right-hand side of
\eqref{eq1.3} belongs to \(C^{0,\alpha/(2N)}(\R^N)\). A further
Schauder estimate gives
\[
u\in C^{1,\beta}(\R^N)
\quad\text{for some }\quad
2s-1<\beta<\min\left\{1,2s-1+\frac{\alpha}{2N}\right\}.
\]
In particular, \(1+\beta>2s\), the fractional Laplacian is pointwise
well defined, and \eqref{eq1.3} holds pointwise. This argument applies to
every \(\lambda\in\R\).

We now apply the integration formulas of Cingolani, Gallo and Tanaka
\cite[Propositions~6.3 and~6.5]{CingolaniGalloTanaka2024}. Their hypotheses
hold because \(u\in H^s\cap C^{1,\beta}\cap L^\infty\),
\(1+\beta>2s\), and, for each \(p\in\{q_L,t_c\}\),
\[
|u|^p\in L^\infty\cap\operatorname{Lip}_{\mathrm{loc}},
\qquad
V_p|u|^p\in L^1,
\qquad
V_p|\nabla(|u|^p)|\in L^1_{\mathrm{loc}}.
\]
Boundedness also gives the weighted integrability at infinity required in
\cite[Proposition~6.3]{CingolaniGalloTanaka2024}.
Choose \(\chi\in C_c^\infty(\R^N)\), equal to one on \(B_1\) and
zero outside \(B_2\), and set \(X_R(x)=\chi(x/R)x\). For \(x\ne y\),
put
\[
\mathcal K_{\kappa,R}(x,y)
=\frac{\operatorname{div}X_R(x)+\operatorname{div}X_R(y)}2
-\frac\kappa2\frac{(X_R(x)-X_R(y))\cdot(x-y)}{|x-y|^2}.
\]
Let \(c_{N,s}\) be normalized by
\[
\norm{(-\Delta)^{s/2}u}_2^2
=\frac{c_{N,s}}2\iint_{\R^{2N}}
\frac{|u(x)-u(y)|^2}{|x-y|^{N+2s}}\,dx\,dy.
\]
Applying the two integration formulas to \eqref{eq1.3}, separately for
each Choquard term, gives
\[
\begin{aligned}
&\frac{c_{N,s}}2\iint_{\R^{2N}}
\frac{|u(x)-u(y)|^2}{|x-y|^{N+2s}}
\mathcal K_{N+2s,R}(x,y)\,dx\,dy\\
&\quad=\frac\lambda2\int_{\R^N}\operatorname{div}X_R\,u^2\,dx\\
&\qquad\quad+\sum_{p\in\{q_L,t_c\}}\frac1p
\iint_{\R^{2N}}I_\alpha(x-y)|u(x)|^p|u(y)|^p
\mathcal K_{N-\alpha,R}(x,y)\,dx\,dy.
\end{aligned}
\]
The factors \(1/p\) follow from
\(\nabla(|u|^p)=p|u|^{p-2}u\nabla u\).
The Lipschitz seminorms of \(X_R\) are uniformly bounded, so the two
kernels in this identity are uniformly bounded. For fixed \(x\ne y\),
\[
\operatorname{div}X_R(x)\longrightarrow N,
\qquad
\mathcal K_{N+2s,R}(x,y)\longrightarrow\frac{N-2s}{2},
\qquad
\mathcal K_{N-\alpha,R}(x,y)\longrightarrow\frac{N+\alpha}{2}.
\]
Dominated convergence, using \(u\in H^s\) and
\(\Dcal_{q_L}(u)+\Dcal_{t_c}(u)<\infty\), therefore yields
\[
\frac{N-2s}{2}\norm{(-\Delta)^{s/2}u}_2^2
=
\frac N2\lambda a
+
\frac{N+\alpha}{2q_L}\Dcal_{q_L}(u)
+
\frac{N+\alpha}{2t_c}\Dcal_{t_c}(u).
\]
On the other hand, testing \eqref{eq1.3} by \(u\) gives
\[
\norm{(-\Delta)^{s/2}u}_2^2
=
\lambda a+\Dcal_{q_L}(u)+\Dcal_{t_c}(u).
\]
Multiplying the last identity by \(N/2\) and subtracting the Pohozaev
identity, we obtain
\[
s\norm{(-\Delta)^{s/2}u}_2^2
=
\left(\frac N2-\frac{N+\alpha}{2q_L}\right)\Dcal_{q_L}(u)
+
\left(\frac N2-\frac{N+\alpha}{2t_c}\right)\Dcal_{t_c}(u).
\]
The first coefficient is zero and the second is \(s/t_c\). Hence
\eqref{eq3.4} follows, and \(P(u)=0\).
\end{proof}

\begin{proposition}
\label{Pro3.6}
If \(0<a<a_*\), then problem \eqref{eq1.3} has no normalized solution with mass \(a\).
\end{proposition}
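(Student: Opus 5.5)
The plan is to combine the Pohozaev identity of Lemma \ref{Lem3.5} with the sharp Choquard Gagliardo--Nirenberg inequality of Lemma \ref{Lem2.3}. Suppose, for contradiction, that \(u\in S(a)\) is a normalized solution of \eqref{eq1.3} with \(0<a<a_*\). Lemma \ref{Lem3.5} applies to any such critical point of \(J|_{S(a)}\), with no sign condition on the multiplier \(\lambda\). It gives
\[
\norm{(-\Delta)^{s/2}u}_2^2=\frac1{t_c}\Dcal_{t_c}(u).
\]

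Next I bound the right-hand side by \eqref{eq2.2} with \(\norm{u}_2^2=a\). This yields
\[
\norm{(-\Delta)^{s/2}u}_2^2\le \frac{C_{N,\alpha,s}}{t_c}a^{t_c-1}\norm{(-\Delta)^{s/2}u}_2^2 .
\]
Rearranging,
\[
\Bigl(1-\frac{C_{N,\alpha,s}}{t_c}a^{t_c-1}\Bigr)\norm{(-\Delta)^{s/2}u}_2^2\le0 .
\]
By the definition of \(a_*\) in this section, \(a<a_*\) is equivalent to \(\frac{C_{N,\alpha,s}}{t_c}a^{t_c-1}<1\), so the coefficient is strictly positive. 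Hence \(\norm{(-\Delta)^{s/2}u}_2=0\).

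As in the proof of Proposition \ref{Pro3.3}, a function in \(H^s(\R^N)\) with vanishing Gagliardo seminorm is constant. Since \(u\in L^2(\R^N)\), the constant is zero, so \(u\equiv0\). This contradicts \(\norm{u}_2^2=a>0\).

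There is essentially no obstacle at this stage. All the analytic difficulty (regularity and the justification of the Pohozaev identity) was already handled in Lemma \ref{Lem3.5}. The only point to check is that the exponent in the definition of \(a_*\) is consistent: \(\frac{N}{2s+\alpha}=\frac1{t_c-1}\) because \(t_c-1=\frac{2s+\alpha}{N}\), so the two expressions for \(a_*\) in Section \ref{sec:intro} and Section \ref{sec:threshold} agree.
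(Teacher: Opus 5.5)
Your proposal is correct and follows essentially the same route as the paper: you combine the Pohozaev identity of Lemma \ref{Lem3.5} with the sharp inequality \eqref{eq2.2}. The only cosmetic difference is in the last step. You show the kinetic term vanishes and hence $u\equiv0$, whereas the paper uses $u\not\equiv0$ to divide by the positive kinetic term and obtain $a\ge a_*$; these are the same argument in contrapositive form.
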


\begin{proof}
Suppose, for a contradiction, that \(u\in S(a)\) is a normalized solution of \eqref{eq1.3}. By Lemma \ref{Lem3.5},
\[
\norm{(-\Delta)^{s/2}u}_2^2
=
\frac1{t_c}\Dcal_{t_c}(u).
\]
On the other hand, by the sharp inequality \eqref{eq2.2},
\[
\Dcal_{t_c}(u)
\le
C_{N,\alpha,s}
\norm{u}_2^{2(t_c-1)}
\norm{(-\Delta)^{s/2}u}_2^2
=
C_{N,\alpha,s}
a^{t_c-1}
\norm{(-\Delta)^{s/2}u}_2^2.
\]
Hence
\[
\norm{(-\Delta)^{s/2}u}_2^2
\le
\frac{C_{N,\alpha,s}}{t_c}
a^{t_c-1}
\norm{(-\Delta)^{s/2}u}_2^2.
\]
Since \(u\in S(a)\) and \(a>0\), one has \(u\not\equiv0\), and therefore
\[
\norm{(-\Delta)^{s/2}u}_2^2>0.
\]
It follows that
\[
1
\le
\frac{C_{N,\alpha,s}}{t_c}a^{t_c-1}.
\]
Equivalently,
\[
a\ge
\left(
\frac{t_c}{C_{N,\alpha,s}}
\right)^{\frac1{t_c-1}}
=
a_*,
\]
which contradicts \(0<a<a_*\).
\end{proof}

\begin{lemma}
\label{Lem3.7}
If \(u\in S(a)\) is a normalized solution of \eqref{eq1.3}, then
\begin{equation*}
a\ge a_*.
\end{equation*}
Moreover, if \(a=a_*\), then \(u\) is an optimizer for the sharp inequality \eqref{eq2.2}.
\end{lemma}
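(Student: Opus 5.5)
The argument repeats the proof of Proposition \ref{Pro3.6} without assuming \(a<a_*\) from the outset. Let \(u\in S(a)\) be a normalized solution. By Lemma \ref{Lem3.5},
\[
\norm{(-\Delta)^{s/2}u}_2^2=\frac1{t_c}\Dcal_{t_c}(u).
\]
The sharp inequality \eqref{eq2.2}, together with \(\norm{u}_2^2=a\), then gives
\[
\norm{(-\Delta)^{s/2}u}_2^2\le \frac{C_{N,\alpha,s}}{t_c}\,a^{t_c-1}\norm{(-\Delta)^{s/2}u}_2^2 .
\]
Since \(u\in L^2\) and \(u\not\equiv0\), \(u\) is not constant, so \(\norm{(-\Delta)^{s/2}u}_2^2>0\). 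Dividing by it yields \(1\le C_{N,\alpha,s}a^{t_c-1}/t_c\). This is equivalent to \(a\ge (t_c/C_{N,\alpha,s})^{1/(t_c-1)}=a_*\), since \(t_c-1>0\).

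For the second assertion, take \(a=a_*\). Then \(C_{N,\alpha,s}a_*^{t_c-1}=t_c\), and the Pohozaev identity reads
\[
\Dcal_{t_c}(u)=t_c\norm{(-\Delta)^{s/2}u}_2^2=C_{N,\alpha,s}\norm{u}_2^{2(t_c-1)}\norm{(-\Delta)^{s/2}u}_2^2 .
\]
So \(u\) achieves equality in \eqref{eq2.2}; since \(u\neq0\), it attains the supremum defining \(C_{N,\alpha,s}\) and is an optimizer.

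No step is a real obstacle: all the analytic work is contained in the Pohozaev identity of Lemma \ref{Lem3.5}, which is already proved for every \(\lambda\in\R\). The only points to check are the positivity of the kinetic term, so that the division is legitimate, and the algebraic identity \(C_{N,\alpha,s}a_*^{t_c-1}=t_c\). Note that \(\frac1{t_c-1}=\frac{N}{2s+\alpha}\), so this agrees with the definition of \(a_*\) in the introduction.
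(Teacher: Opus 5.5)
Your proof is correct and follows the paper's argument essentially verbatim. You combine the Pohozaev identity of Lemma \ref{Lem3.5} with \eqref{eq2.2} and divide by the positive kinetic term to get \(a\ge a_*\); at \(a=a_*\), the identity \(C_{N,\alpha,s}a_*^{t_c-1}=t_c\) forces equality in \eqref{eq2.2}.
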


\begin{proof}
By Lemma \ref{Lem3.5} and \eqref{eq2.2},
\[
\norm{(-\Delta)^{s/2}u}_2^2
=\frac1{t_c}\Dcal_{t_c}(u)
\le
\frac{C_{N,\alpha,s}}{t_c}a^{t_c-1}
\norm{(-\Delta)^{s/2}u}_2^2.
\]
Since \(u\not\equiv0\), the kinetic term is positive. Hence
\(a\ge a_*\). If \(a=a_*\), equality must hold in \eqref{eq2.2}, and \(u\) is
an optimizer.
\end{proof}

\begin{lemma}
\label{Lem3.8}
Let \(u\in S(a)\) be a normalized solution of \eqref{eq1.3}. Then its Lagrange multiplier satisfies
\begin{equation}
\label{eq3.5}\lambda a
=
-
\Dcal_{q_L}(u)
-
\left(1-\frac1{t_c}\right)\Dcal_{t_c}(u).
\end{equation}
In particular,
\begin{equation*}
\lambda<0.
\end{equation*}
\end{lemma}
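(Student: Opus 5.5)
The plan is to combine the Nehari-type identity from testing \eqref{eq1.3} against \(u\) with the Pohozaev identity of Lemma \ref{Lem3.5}.

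\emph{Step 1: the Nehari identity.} By Lemma \ref{Lem2.5}, a normalized solution \(u\in S(a)\) satisfies the weak formulation of \eqref{eq1.3} for every \(v\in H^s(\R^N)\). We take \(v=u\), which is admissible because \(u\in H^s(\R^N)\). Using \(\norm{u}_2^2=a\) and the definition of \(\Dcal_p\), this gives
\[
\norm{(-\Delta)^{s/2}u}_2^2=\lambda a+\Dcal_{q_L}(u)+\Dcal_{t_c}(u).
\]

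\emph{Step 2: eliminate the kinetic term.} Lemma \ref{Lem3.5} gives \(\norm{(-\Delta)^{s/2}u}_2^2=\frac1{t_c}\Dcal_{t_c}(u)\). Substituting this into the identity of Step 1 and solving for \(\lambda a\) yields exactly \eqref{eq3.5}.

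\emph{Step 3: the sign of \(\lambda\).} We have \(q_L>1\) and \(t_c>1\), and \(I_\alpha>0\), so both \(\Dcal_{q_L}(u)\) and \(\Dcal_{t_c}(u)\) are nonnegative. Moreover, \(\Dcal_{q_L}(u)>0\) because \(|u|^{q_L}\ge0\) and \(|u|^{q_L}\not\equiv0\), since \(u\not\equiv0\) on \(S(a)\). The double integral of a strictly positive kernel against a nonnegative, nontrivial function is strictly positive. Hence \(\lambda a<0\), and since \(a>0\) we conclude \(\lambda<0\).

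Everything here is routine once the Pohozaev identity of Lemma \ref{Lem3.5} is available. That lemma is where the real difficulty lies (regularity and justification of the integration formulas), so no step here is a genuine obstacle. The only point needing care is that the test function \(v=u\) is legitimate in the weak formulation, and Lemma \ref{Lem2.5} already provides this.
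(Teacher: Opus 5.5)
Your proposal is correct and follows the paper's proof exactly: test \eqref{eq1.3} with \(u\) to get the Nehari identity, eliminate the kinetic term using the Pohozaev identity of Lemma \ref{Lem3.5}, and conclude \(\lambda<0\) from \(\Dcal_{q_L}(u)>0\), \(\Dcal_{t_c}(u)\ge0\), and \(t_c>1\). Your algebra and the positivity argument based on the strictly positive Riesz kernel are both right.
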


\begin{proof}
Testing \eqref{eq1.3} by \(u\) gives
\begin{equation}
\label{eq3.6}
\norm{(-\Delta)^{s/2}u}_2^2
=
\lambda\norm{u}_2^2+
\Dcal_{q_L}(u)+\Dcal_{t_c}(u).
\end{equation}
Combining \eqref{eq3.6} with \(\norm{u}_2^2=a\) and Lemma \ref{Lem3.5} gives
\eqref{eq3.5}. Since \(u\not\equiv0\), the positivity of the Riesz kernel gives
\(\Dcal_{q_L}(u)>0\), while \(\Dcal_{t_c}(u)\ge0\) and \(t_c>1\). Thus the
right-hand side of \eqref{eq3.5} is negative, and \(\lambda<0\).
\end{proof}

\subsection{The critical mass case}
\begin{proposition}
\label{Pro3.9}
At the critical mass,
\begin{equation}
\label{eq3.7}m(a_*)
=
-\frac1{2q_L}L_{N,\alpha}a_*^{q_L}.
\end{equation}
Moreover, every normalized solution of \eqref{eq1.3} with mass \(a_*\) is an optimizer for \eqref{eq2.2}.
\end{proposition}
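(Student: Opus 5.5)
The identity \eqref{eq3.7} is the case \(a=a_*\) of Proposition \ref{Pro3.3}, so I would simply invoke it. Concretely, Proposition \ref{Pro3.1} applies with \(a=a_*\). There the coefficient \(1-\frac{C_{N,\alpha,s}}{t_c}a_*^{t_c-1}\) vanishes by the definition of \(a_*\), and this gives \(m(a_*)\ge-\frac1{2q_L}L_{N,\alpha}a_*^{q_L}\). Lemma \ref{Lem3.2}, which holds for every \(a>0\), gives the reverse inequality. I would record this in one line.

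For the second assertion, let \(u\in S(a_*)\) be a normalized solution of \eqref{eq1.3}, that is, a critical point of \(J|_{S(a_*)}\). Lemma \ref{Lem3.5} gives the Pohozaev relation \eqref{eq3.4}, namely \(\norm{(-\Delta)^{s/2}u}_2^2=\frac1{t_c}\Dcal_{t_c}(u)\). Lemma \ref{Lem2.3} combined with \(a_*^{t_c-1}=t_c/C_{N,\alpha,s}\) gives
\[
\Dcal_{t_c}(u)\le C_{N,\alpha,s}a_*^{t_c-1}\norm{(-\Delta)^{s/2}u}_2^2=t_c\norm{(-\Delta)^{s/2}u}_2^2 .
\]
Together with \eqref{eq3.4}, this forces \(\Dcal_{t_c}(u)=C_{N,\alpha,s}\norm{u}_2^{2(t_c-1)}\norm{(-\Delta)^{s/2}u}_2^2\). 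Moreover \(u\not\equiv0\), because \(\norm{u}_2^2=a_*>0\), so \(u\) is a genuine optimizer of \eqref{eq2.2}. This is exactly the equality case of Lemma \ref{Lem3.7} with \(a=a_*\), and I would cite that lemma directly.

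There is no substantial obstacle, since all the analytic work (the regularity and the Pohozaev identity) is already contained in Lemma \ref{Lem3.5}. The only point needing care is consistency between the two expressions for \(a_*\): the introduction uses the exponent \(N/(2s+\alpha)\), while Section \ref{sec:threshold} uses \(1/(t_c-1)\). These agree because \(t_c-1=(2s+\alpha)/N\), and I would note this explicitly so that \(C_{N,\alpha,s}a_*^{t_c-1}=t_c\) is transparent.
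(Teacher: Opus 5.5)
Your proposal is correct and follows the paper's proof. The paper obtains \eqref{eq3.7} as Proposition~\ref{Pro3.3} at \(a=a_*\), that is, Proposition~\ref{Pro3.1} combined with Lemma~\ref{Lem3.2}, and it obtains the optimizer property as the equality case of Lemma~\ref{Lem3.7}, which rests on the Pohozaev identity of Lemma~\ref{Lem3.5}. Your observation that the two formulas for \(a_*\) agree because \(t_c-1=(2s+\alpha)/N\) is a correct consistency check.
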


\begin{proof}
The identity \eqref{eq3.7} is exactly Proposition \ref{Pro3.3} with \(a=a_*\).

Let \(u\in S(a_*)\) be a normalized solution of \eqref{eq1.3}. By Lemma \ref{Lem3.7}, \(u\) is an optimizer for the sharp inequality \eqref{eq2.2}. This proves the second assertion.
\end{proof}

\begin{lemma}
\label{Lem3.10}
Let \(u\in S(a_*)\) be an optimizer for \eqref{eq2.2}. Then \(u\) satisfies
\begin{equation}
\label{eq3.8}\int_{\R^N}
(-\Delta)^{s/2}u(-\Delta)^{s/2}v\,dx
+
\frac{t_c-1}{a_*}
\norm{(-\Delta)^{s/2}u}_2^2
\int_{\R^N}uv\,dx
=
\int_{\R^N}
\big(I_\alpha*|u|^{t_c}\big)|u|^{t_c-2}uv\,dx
\end{equation}
for all \(v\in H^s(\R^N)\).
\end{lemma}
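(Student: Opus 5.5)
Since \(u\in S(a_*)\) is an optimizer for \eqref{eq2.2}, it is a maximum point of the Weinstein-type quotient
\[
W(w)=\frac{\Dcal_{t_c}(w)}{\norm{w}_2^{2(t_c-1)}\norm{(-\Delta)^{s/2}w}_2^2},
\qquad w\in H^s(\R^N)\setminus\{0\}.
\]
The plan is to write the Euler--Lagrange equation of \(W\) at \(u\) and then use the normalization \(\norm{u}_2^2=a_*\) to simplify the coefficients. First, \(\norm{(-\Delta)^{s/2}u}_2>0\) because \(u\ne0\), so \(W\) is \(C^1\) near \(u\). This follows from Lemma \ref{Lem2.5}, which gives \(\Dcal_{t_c}\in C^1\) with the stated derivative, together with the smoothness of the two quadratic norms away from zero. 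Moreover, \(\Dcal_{t_c}(u)=C_{N,\alpha,s}a_*^{t_c-1}\norm{(-\Delta)^{s/2}u}_2^2>0\).

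Next, for each \(v\in H^s(\R^N)\), the function \(\varepsilon\mapsto \log W(u+\varepsilon v)\) has a maximum at \(\varepsilon=0\), so its derivative vanishes there. Differentiating, and writing \(D=\Dcal_{t_c}(u)\), \(T=\norm{(-\Delta)^{s/2}u}_2^2\), this gives
\[
\frac{2t_c}{D}\int_{\R^N}\big(I_\alpha*|u|^{t_c}\big)|u|^{t_c-2}uv\,dx
-\frac{2(t_c-1)}{a_*}\int_{\R^N}uv\,dx
-\frac{2}{T}\int_{\R^N}(-\Delta)^{s/2}u(-\Delta)^{s/2}v\,dx=0 .
\]

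Finally, we substitute the optimality relation \(D=C_{N,\alpha,s}a_*^{t_c-1}T\) and use the definition of \(a_*\), which gives \(C_{N,\alpha,s}a_*^{t_c-1}=t_c\). Hence \(D=t_cT\), so \(2t_c/D=2/T\). Multiplying the identity by \(T/2\) yields exactly \eqref{eq3.8}.

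There is no real obstacle here. The only points to check are the differentiability of \(W\), which is covered by Lemma \ref{Lem2.5}, and the bookkeeping of constants. In particular, one needs the identity \(a_*^{t_c-1}=t_c/C_{N,\alpha,s}\), which holds with the Section 3 form of \(a_*\) because \(t_c-1=(2s+\alpha)/N\).
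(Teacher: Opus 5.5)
Your proposal is correct and follows essentially the same route as the paper: you take the first variation of the Weinstein quotient at the optimizer (your logarithmic derivative is the same computation), insert \(\langle \Dcal_{t_c}'(u),v\rangle\) from Lemma \ref{Lem2.5}, and use \(\Dcal_{t_c}(u)=C_{N,\alpha,s}a_*^{t_c-1}\norm{(-\Delta)^{s/2}u}_2^2=t_c\norm{(-\Delta)^{s/2}u}_2^2\) to reduce the coefficients to \eqref{eq3.8}. Your explicit checks are small details the paper leaves implicit: that the kinetic term and \(\Dcal_{t_c}(u)\) are positive, so the quotient is \(C^1\) near \(u\), and that the two forms of \(a_*\) agree.
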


\begin{proof}
Since \(u\) is an optimizer for \eqref{eq2.2}, it is a critical point of the quotient
\[
\frac{\Dcal_{t_c}(w)}
{\norm{w}_2^{2(t_c-1)}
\norm{(-\Delta)^{s/2}w}_2^2},
\qquad w\in H^s(\R^N)\setminus\{0\}.
\]
Thus, for every \(v\in H^s(\R^N)\),
\[
\frac{\langle \Dcal_{t_c}'(u),v\rangle}{\Dcal_{t_c}(u)}
=
(t_c-1)
\frac{2\int_{\R^N}uv\,dx}{\norm{u}_2^2}
+
\frac{
2\int_{\R^N}
(-\Delta)^{s/2}u(-\Delta)^{s/2}v\,dx
}{
\norm{(-\Delta)^{s/2}u}_2^2
}.
\]
Since
\[
\langle \Dcal_{t_c}'(u),v\rangle
=
2t_c
\int_{\R^N}
\big(I_\alpha*|u|^{t_c}\big)|u|^{t_c-2}uv\,dx,
\]
we get
\[
\begin{aligned}
\frac{t_c}{\Dcal_{t_c}(u)}
\int_{\R^N}
\big(I_\alpha*|u|^{t_c}\big)|u|^{t_c-2}uv\,dx
&=
\frac{t_c-1}{a_*}
\int_{\R^N}uv\,dx  \\
&\quad+
\frac{
1
}{
\norm{(-\Delta)^{s/2}u}_2^2
}
\int_{\R^N}
(-\Delta)^{s/2}u(-\Delta)^{s/2}v\,dx .
\end{aligned}
\]
Because \(u\in S(a_*)\) attains \eqref{eq2.2}, and
\[
C_{N,\alpha,s}a_*^{t_c-1}=t_c,
\]
we have
\[
\Dcal_{t_c}(u)
=
t_c
\norm{(-\Delta)^{s/2}u}_2^2.
\]
Substituting this identity into the previous equality gives \eqref{eq3.8}.
\end{proof}

\begin{proposition}
\label{Pro3.11}
Let \(u\in S(a_*)\) be a normalized solution of \eqref{eq1.3}. Then there exists a constant \(\kappa>0\) such that
\begin{equation*}
\big(I_\alpha*|u|^{q_L}\big)|u|^{q_L-2}u
=
\kappa u
\qquad
\text{in }H^{-s}(\R^N).
\end{equation*}
Equivalently,
\begin{equation}
\label{eq3.9}\int_{\R^N}
\big(I_\alpha*|u|^{q_L}\big)|u|^{q_L-2}uv\,dx
=
\kappa\int_{\R^N}uv\,dx
\end{equation}
for all \(v\in H^s(\R^N)\).
\end{proposition}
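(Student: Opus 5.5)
Our plan is to subtract the Euler--Lagrange identity \eqref{eq3.8} from the weak form of \eqref{eq1.3}; the kinetic terms and the \(t_c\)-Choquard terms cancel, and only linear terms and the \(q_L\)-term remain. Let \(u\in S(a_*)\) be a normalized solution with Lagrange multiplier \(\lambda\), given by Lemma \ref{Lem2.5}. By Proposition \ref{Pro3.9}, \(u\) is an optimizer for \eqref{eq2.2}, so Lemma \ref{Lem3.10} applies and yields \eqref{eq3.8} for every \(v\in H^s(\R^N)\).

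On the other hand, the weak formulation of \eqref{eq1.3} (Lemma \ref{Lem2.5}) reads
\[
\int_{\R^N}(-\Delta)^{s/2}u(-\Delta)^{s/2}v\,dx
-\int_{\R^N}\big(I_\alpha*|u|^{t_c}\big)|u|^{t_c-2}uv\,dx
=\lambda\int_{\R^N}uv\,dx+\int_{\R^N}\big(I_\alpha*|u|^{q_L}\big)|u|^{q_L-2}uv\,dx .
\]
By \eqref{eq3.8}, the left-hand side equals
\(-\frac{t_c-1}{a_*}\norm{(-\Delta)^{s/2}u}_2^2\int_{\R^N}uv\,dx\). Substituting gives \eqref{eq3.9} with
\[
\kappa=-\lambda-\frac{t_c-1}{a_*}\norm{(-\Delta)^{s/2}u}_2^2 .
\]

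It remains to show \(\kappa>0\), and we will do this by testing with \(v=u\). This gives \(\kappa a_*=\Dcal_{q_L}(u)\), and \(\Dcal_{q_L}(u)>0\) because the Riesz kernel is positive and \(u\not\equiv0\). Hence \(\kappa>0\).

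As a consistency check, the same value follows from Lemma \ref{Lem3.8} together with \eqref{eq3.4}. Indeed, \(-\lambda a_*=\Dcal_{q_L}(u)+(t_c-1)\norm{(-\Delta)^{s/2}u}_2^2\), and subtracting \((t_c-1)\norm{(-\Delta)^{s/2}u}_2^2\) again leaves \(\Dcal_{q_L}(u)\).

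The identity holds in \(H^{-s}(\R^N)\) because both sides are bounded linear functionals on \(H^s(\R^N)\). For the left side, Lemma \ref{Lem2.5} already shows that the derivative of \(\Dcal_{q_L}\) is a bounded functional on \(H^s(\R^N)\).

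There is no serious obstacle here. The only delicate point is that \eqref{eq3.8} requires \(u\) to be an optimizer of \eqref{eq2.2}, and this is supplied by the Pohozaev identity through Lemma \ref{Lem3.7} and Proposition \ref{Pro3.9}.
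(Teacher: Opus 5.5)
Your proposal is correct and follows essentially the same argument as the paper. You subtract the Euler--Lagrange identity \eqref{eq3.8} from the weak form of \eqref{eq1.3} to get \(\kappa=-\lambda-\frac{t_c-1}{a_*}\norm{(-\Delta)^{s/2}u}_2^2\), and then test with \(v=u\) to obtain \(\kappa a_*=\Dcal_{q_L}(u)>0\); your consistency check via Lemma \ref{Lem3.8} and \eqref{eq3.4} is a correct but unnecessary extra.
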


\begin{proof}
By Proposition \ref{Pro3.9}, \(u\) is an optimizer for \eqref{eq2.2}. Hence Lemma \ref{Lem3.10} gives
\begin{equation}
\label{eq3.10}\int_{\R^N}
(-\Delta)^{s/2}u(-\Delta)^{s/2}v\,dx
+
\frac{t_c-1}{a_*}
\norm{(-\Delta)^{s/2}u}_2^2
\int_{\R^N}uv\,dx
=
\int_{\R^N}
\big(I_\alpha*|u|^{t_c}\big)|u|^{t_c-2}uv\,dx .
\end{equation}
On the other hand, since \(u\) is a weak solution of \eqref{eq1.3}, there exists \(\lambda\in\R\) such that
\begin{equation}
\label{eq3.11}\begin{aligned}
\int_{\R^N}
(-\Delta)^{s/2}u(-\Delta)^{s/2}v\,dx
&=
\lambda\int_{\R^N}uv\,dx  \\
&\quad+
\int_{\R^N}
\big(I_\alpha*|u|^{q_L}\big)|u|^{q_L-2}uv\,dx  \\
&\quad+
\int_{\R^N}
\big(I_\alpha*|u|^{t_c}\big)|u|^{t_c-2}uv\,dx
\end{aligned}
\end{equation}
for all \(v\in H^s(\R^N)\). Subtracting \eqref{eq3.10} from \eqref{eq3.11}, we obtain
\[
\int_{\R^N}
\big(I_\alpha*|u|^{q_L}\big)|u|^{q_L-2}uv\,dx
=
-\left(
\lambda+
\frac{t_c-1}{a_*}
\norm{(-\Delta)^{s/2}u}_2^2
\right)
\int_{\R^N}uv\,dx .
\]
Set
\[
\kappa
=
-\lambda-
\frac{t_c-1}{a_*}
\norm{(-\Delta)^{s/2}u}_2^2 .
\]
Then \eqref{eq3.9} holds.

Taking \(v=u\) in \eqref{eq3.9}, we get
\[
\Dcal_{q_L}(u)=\kappa a_*.
\]
Since \(u\not\equiv0\) and the Riesz kernel is positive,
\[
\Dcal_{q_L}(u)>0.
\]
Therefore \(\kappa>0\).
\end{proof}

\begin{lemma}
\label{Lem3.12}
Every real-valued optimizer of \eqref{eq2.2} has a fixed sign.
\end{lemma}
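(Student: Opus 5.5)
Let \(u\) be a real-valued optimizer of \eqref{eq2.2}, and compare \(u\) with \(|u|\). By the standard pointwise inequality \(\bigl||u(x)|-|u(y)|\bigr|\le|u(x)-u(y)|\) and the Gagliardo representation of \(\norm{(-\Delta)^{s/2}\cdot}_2^2\) recalled in the proof of Lemma \ref{Lem3.5}, we have \(\norm{(-\Delta)^{s/2}|u|}_2\le\norm{(-\Delta)^{s/2}u}_2\). On the other hand, \(\norm{|u|}_2=\norm{u}_2\) and \(\Dcal_{t_c}(|u|)=\Dcal_{t_c}(u)\), because \(\Dcal_{t_c}\) depends only on \(|u|\). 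Hence the quotient at \(|u|\) is at least the quotient at \(u\). Since \(u\) is already maximal, equality must hold, which forces
\[
\iint_{\R^{2N}}\frac{|u(x)-u(y)|^2-\bigl(|u(x)|-|u(y)|\bigr)^2}{|x-y|^{N+2s}}\,dx\,dy=0 .
\]

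Next we extract the sign information. The integrand equals \(2\bigl(|u(x)||u(y)|-u(x)u(y)\bigr)=4u^+(x)u^-(y)+4u^-(x)u^+(y)\), which is nonnegative. So it vanishes almost everywhere, and in particular
\[
\iint_{\R^{2N}}\frac{u^+(x)u^-(y)}{|x-y|^{N+2s}}\,dx\,dy=0 .
\]
The kernel is strictly positive, so \(u^+(x)u^-(y)=0\) for a.e. \((x,y)\). By Fubini, either \(u^+=0\) a.e. or \(u^-=0\) a.e. Thus \(u\) has a fixed sign (nonnegative or nonpositive a.e.). Note that \(u\not\equiv0\), since \(\Dcal_{t_c}(u)>0\).

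For a strict sign statement (\(u>0\) or \(u<0\) everywhere), one further step is needed. Say \(u\ge0\). By Lemma \ref{Lem3.10}, after rescaling, \(u\) satisfies \((-\Delta)^su+\mu u=(I_\alpha*u^{t_c})u^{t_c-1}\ge0\) with \(\mu>0\). Positivity then follows from the strict positivity of the kernel \(\mathcal B_s\) of the resolvent, as in the proof of Lemma \ref{Lem3.5}: \(u=\mathcal B_s*g\) with \(g\ge0\), \(g\not\equiv0\), which gives \(u>0\). The main (mild) obstacle is only this last step, namely justifying the representation \(u=((-\Delta)^s+\mu)^{-1}g\) for the weak solution; this follows from uniqueness in \(H^s\) of weak solutions of the linear resolvent equation. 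The fixed-sign assertion itself needs only the first two paragraphs.
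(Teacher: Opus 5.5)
Your proposal is correct and follows the paper's argument: you compare \(u\) with \(|u|\), use the Gagliardo representation to force equality of the seminorms, and conclude that \(u^+(x)u^-(y)=0\) for a.e. \((x,y)\) (the paper phrases this as \(u(x)u(y)\ge0\) a.e.) before applying Fubini. Your third paragraph on strict positivity is not needed for the statement. The paper proves only the a.e. sign here and obtains \(u>0\) separately, in Lemma~\ref{Lem3.13}, via the positive resolvent kernel, essentially as you sketch.
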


\begin{proof}
Let \(u\) be a real-valued optimizer of \eqref{eq2.2}. Then
\[
\Dcal_{t_c}(|u|)
=
\Dcal_{t_c}(u),
\qquad
\norm{|u|}_2=\norm{u}_2.
\]
Moreover, the Gagliardo representation gives
\[
\norm{(-\Delta)^{s/2}|u|}_2
\le
\norm{(-\Delta)^{s/2}u}_2.
\]
If the last inequality were strict, the quotient defining \(C_{N,\alpha,s}\) would be strictly larger at \(|u|\) than at \(u\), which is impossible. Hence equality holds.

Equality of the Gagliardo seminorms implies equality in
\[
\bigl||u(x)|-|u(y)|\bigr|^2
\le
|u(x)-u(y)|^2
\]
for almost every pair \((x,y)\in\R^N\times\R^N\). Thus
\[
u(x)u(y)\ge0
\]
for almost every pair \((x,y)\). Therefore either \(u\ge0\) a.e. in \(\R^N\) or \(u\le0\) a.e. in \(\R^N\).
\end{proof}

\begin{lemma}
\label{Lem3.13}
Let \(u\in S(a_*)\), \(u\ge0\), be an optimizer for
\eqref{eq2.2}. Suppose that
\begin{equation}
\label{eq3.12}
\big(I_\alpha*u^{q_L}\big)u^{q_L-1}
=
\kappa u
\qquad
\text{in }H^{-s}(\R^N)
\end{equation}
for some \(\kappa>0\). Then there exist constants \(A>0\), \(\rho>0\), and a point \(x_0\in\R^N\) such that
\begin{equation}
\label{eq3.13}
u(x)
=
A
\left(
\frac{\rho}{\rho^2+|x-x_0|^2}
\right)^{\frac N2}
\qquad
\text{for a.e. }x\in\R^N.
\end{equation}
\end{lemma}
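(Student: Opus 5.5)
The plan is to turn \eqref{eq3.12} into the Euler--Lagrange equation of the sharp lower-endpoint Hardy--Littlewood--Sobolev inequality, and then invoke the classification of its positive solutions. Put \(F=u^{q_L}\) and \(r=2N/(N+\alpha)\), as in the proof of Lemma \ref{Lem2.2}. Then \(F\in L^r(\R^N)\), \(I_\alpha*F\in L^{2N/(N-\alpha)}(\R^N)\), and both sides of \eqref{eq3.12} are \(L^1_{\mathrm{loc}}\) functions. The identity in \(H^{-s}\) therefore tested against \(C_c^\infty\) gives
\[
\big(I_\alpha*F\big)u^{q_L-1}=\kappa u\qquad\text{a.e. in }\R^N .
\]

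The first step is to show \(u>0\) everywhere, so that we can divide by \(u\). Since \(u\) optimizes \eqref{eq2.2}, Lemma \ref{Lem3.10} gives
\[
(-\Delta)^s u+cu=\big(I_\alpha*u^{t_c}\big)u^{t_c-1}\ge0,\qquad c>0 .
\]
The regularity argument in the proof of Lemma \ref{Lem3.5} applies verbatim to this single-term equation, with only the \(t_c\) term present. It yields \(u\in C^{1,\beta}\cap L^\infty\), and the equation holds pointwise. If \(u(x_0)=0\) at some point, then
\[
(-\Delta)^su(x_0)=-c_{N,s}\,\mathrm{P.V.}\!\int_{\R^N}\frac{u(y)}{|x_0-y|^{N+2s}}\,dy<0,
\]
because \(u\ge0\) and \(u\not\equiv0\). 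The right-hand side of the equation is \(\ge0\) at \(x_0\), so this is a contradiction, and hence \(u>0\).

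Dividing by \(u\) and using \(q_L-2=-(N-\alpha)/N\), we get
\[
I_\alpha*F=\kappa\,u^{2-q_L}=\kappa\,F^{\frac{2-q_L}{q_L}}=\kappa F^{r-1},
\qquad\text{since}\quad \frac{2-q_L}{q_L}=\frac{N-\alpha}{N+\alpha}=r-1 .
\]
Now set \(G=F^{r-1}>0\). Then
\[
G=\kappa^{-1}A_{N,\alpha}\int_{\R^N}\frac{G(y)^{\frac{N+\alpha}{N-\alpha}}}{|x-y|^{N-\alpha}}\,dy ,
\]
which is the conformally invariant integral equation with exponent \((N+\alpha)/(N-\alpha)\). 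Its integrability is exactly the natural class:
\[
G\in L^{r/(r-1)}(\R^N)=L^{2N/(N-\alpha)}(\R^N).
\]
By the classification of Chen--Li--Ou (moving spheres or moving planes in integral form, which requires precisely \(G\in L^{2N/(N-\alpha)}_{\mathrm{loc}}\) together with positivity), we obtain
\[
G(x)=c\left(\frac{\rho}{\rho^2+|x-x_0|^2}\right)^{\frac{N-\alpha}{2}}
\]
for some \(c,\rho>0\) and \(x_0\in\R^N\).

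Undoing the substitutions,
\[
F=G^{\frac{N+\alpha}{N-\alpha}}\propto\left(\frac{\rho}{\rho^2+|x-x_0|^2}\right)^{\frac{N+\alpha}{2}},
\qquad
u=F^{\frac{N}{N+\alpha}}\propto\left(\frac{\rho}{\rho^2+|x-x_0|^2}\right)^{\frac N2},
\]
which is \eqref{eq3.13}.

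The main obstacle is the classification step. The paper cites only Lieb's HLS result, so I would cite Chen--Li--Ou's theorem for the integral equation and carefully check its hypotheses, namely positivity and \(G\in L^{2N/(N-\alpha)}\). A secondary point is to justify the strict positivity of \(u\) via the pointwise argument above, rather than working only on \(\{u>0\}\). If the strong maximum principle route gets delicate, I would fall back to a different argument: on the set \(\{u=0\}\) the identity is vacuous, but one can use \(I_\alpha*F>0\) everywhere together with the Lieb rearrangement and uniqueness theory for HLS maximizers.
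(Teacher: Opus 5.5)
Your proposal is correct and follows the paper's proof: you convert \eqref{eq3.12} into an a.e.\ identity, obtain \(u>0\) from the Euler--Lagrange equation of Lemma~\ref{Lem3.10}, divide to get \(I_\alpha*F=\kappa F^{(N-\alpha)/(N+\alpha)}\), and classify by Chen--Li--Ou, where your substitution \(G=F^{r-1}\) is the normalization the paper leaves implicit. The only difference is the positivity step: you use the \(C^{1,\beta}\) regularity of Lemma~\ref{Lem3.5} and a pointwise maximum-principle argument, whereas the paper writes \(u=((-\Delta)^s+\beta)^{-1}\bigl[(I_\alpha*u^{t_c})u^{t_c-1}\bigr]\) and uses the strict positivity of the resolvent kernel, which gives \(u>0\) a.e.\ without any regularity.
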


\begin{proof}
Since \(u\) is an optimizer for \eqref{eq2.2}, Lemma \ref{Lem3.10} gives
\[
(-\Delta)^s u+\beta u
=
\big(I_\alpha*u^{t_c}\big)u^{t_c-1}
\quad\text{in }H^{-s}(\R^N),
\qquad
\beta=
\frac{t_c-1}{a_*}\norm{(-\Delta)^{s/2}u}_2^2>0.
\]
The right-hand side is nonnegative and nontrivial. Since the resolvent
\(((-\Delta)^s+\beta)^{-1}\) has a strictly positive convolution kernel, \(u>0\) a.e. in \(\R^N\).

Set \(F=u^{q_L}\). Since \(u\in L^2(\R^N)\) and \(q_L=(N+\alpha)/N\),
\(F\in L^{2N/(N+\alpha)}(\R^N)\). The Hardy--Littlewood--Sobolev inequality
implies \(I_\alpha*F\in L^{2N/(N-\alpha)}(\R^N)\), and
\(u^{q_L-1}\in L^{2N/\alpha}(\R^N)\). Hence the left-hand side of
\eqref{eq3.12} belongs to \(L^2(\R^N)\), so \eqref{eq3.12} holds a.e.
Dividing by \(u^{q_L-1}\) yields
\[
I_\alpha*F=
\kappa u^{2-q_L}=
\kappa F^{\frac{N-\alpha}{N+\alpha}}
\quad\text{a.e. in }\R^N.
\]
After multiplying \(F\) by a suitable positive constant, this is the critical
Hardy--Littlewood--Sobolev integral equation. By the classification theorem of
Chen, Li and Ou \cite{ChenLiOu2006},
\[
F(x)
=
B
\left(
\frac{\rho}{\rho^2+|x-x_0|^2}
\right)^{\frac{N+\alpha}{2}}
\]
for some \(B>0\), \(\rho>0\), and \(x_0\in\R^N\). Since
\(u=F^{1/q_L}\), \eqref{eq3.13} follows.
\end{proof}

\begin{lemma}
\label{Lem3.14}
Let
\begin{equation}
\label{eq3.14}U(x)
=
A
\left(
\frac{\rho}{\rho^2+|x-x_0|^2}
\right)^{\frac N2}
\end{equation}
with \(A>0\), \(\rho>0\), and \(x_0\in\R^N\). Then, after a translation, there exist constants \(R>0\), \(c_1,c_2>0\), and \(C>0\) such that, for \(|x|\ge R\),
\begin{equation}
\label{eq3.15}c_1|x|^{-N}\le U(x)\le c_2|x|^{-N},
\end{equation}
\begin{equation}
\label{eq3.16}|(-\Delta)^sU(x)|
\le
C|x|^{-N-2s}(1+\log |x|),
\end{equation}
and
\begin{equation}
\label{eq3.17}\big(I_\alpha*U^{t_c}\big)(x)U(x)^{t_c-1}
\le
C|x|^{-N-2s}.
\end{equation}
\end{lemma}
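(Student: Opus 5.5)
We translate so that \(x_0=0\). By the scaling \(U(x)=A\rho^{-N/2}V(x/\rho)\) with \(V(y)=(1+|y|^2)^{-N/2}\), it suffices to treat \(\rho=1\). The three estimates are only rescaled by constants, and the logarithm changes by an additive constant, which is absorbed for \(|x|\ge R\).

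\textbf{Estimate \eqref{eq3.15}.} This is immediate. For \(|x|\ge1\) we have \(|x|^2\le 1+|x|^2\le 2|x|^2\), so \(2^{-N/2}A|x|^{-N}\le U(x)\le A|x|^{-N}\).

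\textbf{Estimate \eqref{eq3.16}.} We use the pointwise singular-integral representation, which is valid because \(U\in C^\infty\) with bounded derivatives:
\[
(-\Delta)^sU(x)=\frac{c_{N,s}}{2}\int_{\R^N}\frac{2U(x)-U(x+y)-U(x-y)}{|y|^{N+2s}}\,dy .
\]
Fix \(|x|\ge 2\) and split the integral into three regions.

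On \(|y|<|x|/2\), Taylor's formula gives \(|2U(x)-U(x+y)-U(x-y)|\le |y|^2\sup_{B_{|x|/2}(x)}|D^2U|\le C|y|^2|x|^{-N-2}\). This region therefore contributes \(C|x|^{-N-2}|x|^{2-2s}=C|x|^{-N-2s}\).

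On \(|y|\ge|x|/2\), the term \(2U(x)\) contributes at most \(C|x|^{-N}\int_{|y|\ge|x|/2}|y|^{-N-2s}\,dy\le C|x|^{-N-2s}\).

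For the terms \(U(x\pm y)\), we bound \(\int_{|y|\ge|x|/2}U(x+y)|y|^{-N-2s}\,dy\), splitting further:
\begin{itemize}
\item Where \(|x+y|\le|x|/4\), we have \(|y|\sim|x|\). The contribution is at most \(C|x|^{-N-2s}\int_{B_{|x|/4}}U\), and since \(U\sim|z|^{-N}\) at infinity, \(\int_{B_{|x|/4}}U\le C(1+\log|x|)\). This is the source of the logarithm.
\item Where \(|x+y|>|x|/4\), we have \(U(x+y)\le C|x|^{-N}\), and the remaining integral of \(|y|^{-N-2s}\) gives \(C|x|^{-2s}\).
\end{itemize}
Combining all pieces yields \eqref{eq3.16}.

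\textbf{Estimate \eqref{eq3.17}.} Since \(U^{t_c-1}\le C|x|^{-N(t_c-1)}=C|x|^{-\alpha-2s}\) for large \(|x|\), it suffices to show \((I_\alpha*U^{t_c})(x)\le C|x|^{-(N-\alpha)}\). We have \(U^{t_c}\le C(1+|z|)^{-Nt_c}\) with \(Nt_c=N+\alpha+2s>N\), so \(U^{t_c}\in L^1\cap L^\infty\). Split the convolution integral:
\begin{itemize}
\item On \(|z|<|x|/2\), the kernel is \(\le C|x|^{\alpha-N}\), and integrating \(U^{t_c}\) gives \(C|x|^{\alpha-N}\).
\item On \(|z|\ge|x|/2\), we have \(U^{t_c}(z)\le C|x|^{-N-\alpha-2s}\). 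Then \(\int_{|x-z|\le|x|}|x-z|^{\alpha-N}\,dz\le C|x|^\alpha\) gives a contribution \(C|x|^{-N-2s}\). On \(|x-z|>|x|\), the kernel is \(\le|x|^{\alpha-N}\) and \(\int U^{t_c}<\infty\).
\end{itemize}
Hence \(I_\alpha*U^{t_c}\le C|x|^{\alpha-N}\), and multiplying by \(U^{t_c-1}\) gives \(C|x|^{-N-2s}\), which is \eqref{eq3.17}.

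The main obstacle is the far-field part of \eqref{eq3.16}. The \(|x|^{-N}\) decay of \(U\) makes \(\int U\) diverge logarithmically, so the region near \(y=-x\) must be handled carefully. We also need the second-derivative bound \(|D^2U(z)|\le C|z|^{-N-2}\) for large \(|z|\), which follows by direct differentiation of \((1+|z|^2)^{-N/2}\).
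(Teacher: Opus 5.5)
Your proof is correct and follows essentially the same argument as the paper: the symmetric singular-integral formula with a Taylor bound on $|y|<|x|/2$, a logarithmic loss from $\int_{B_r}U$ in the far field, and a near/far splitting of the Riesz convolution that gives $(I_\alpha*U^{t_c})(x)\le C|x|^{-(N-\alpha)}$. The differences are cosmetic and both versions work: you reduce to $\rho=1$, and you split the far field at $|x+y|\le|x|/4$ where the paper splits at $|x+y|\le 2|x|$.
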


\begin{proof}
By translation we may assume that \(x_0=0\). Throughout the proof, the
constants may depend on \(A,\rho,N,s,\alpha\), but not on \(x\). The explicit
formula \eqref{eq3.14} gives
\[
U(y)\le C(1+|y|)^{-N},
\qquad
|D^2U(y)|\le C(1+|y|)^{-N-2},
\qquad y\in\R^N.
\]
It also gives \eqref{eq3.15} for sufficiently large \(|x|\).

We first prove \eqref{eq3.16}. Put \(r=|x|\ge2\). The symmetric
singular integral formula gives
\[
|(-\Delta)^sU(x)|
\le C\int_{\R^N}
\frac{|2U(x)-U(x+z)-U(x-z)|}{|z|^{N+2s}}\,dz.
\]
If \(|z|\le r/2\), then \(|x\pm\theta z|\ge r/2\) for
\(0\le\theta\le1\). Taylor's formula therefore yields
\[
|2U(x)-U(x+z)-U(x-z)|\le Cr^{-N-2}|z|^2.
\]
Since \(s<1\), it follows that
\[
\begin{aligned}
\int_{|z|\le r/2}
\frac{|2U(x)-U(x+z)-U(x-z)|}{|z|^{N+2s}}\,dz
&\le Cr^{-N-2}\int_0^{r/2}t^{1-2s}\,dt\\
&\le Cr^{-N-2s}.
\end{aligned}
\]
For \(|z|>r/2\), the contribution of \(U(x)\) satisfies
\[
U(x)\int_{|z|>r/2}|z|^{-N-2s}\,dz\le Cr^{-N-2s}.
\]
Changing variables \(y=x+z\), we estimate the contribution of \(U(x+z)\)
by splitting into \(|y|\le2r\) and \(|y|>2r\). On the first region,
\[
\begin{aligned}
\int_{\substack{|x-y|>r/2\\ |y|\le2r}}
\frac{U(y)}{|x-y|^{N+2s}}\,dy
&\le Cr^{-N-2s}\int_{|y|\le2r}(1+|y|)^{-N}\,dy\\
&\le Cr^{-N-2s}(1+\log r).
\end{aligned}
\]
Here the logarithm follows from
\[
\int_{|y|\le2r}(1+|y|)^{-N}\,dy
\le C\left(1+\int_1^{2r}\frac{dt}{t}\right).
\]
On the second region, \(|x-y|\ge |y|/2\), so
\[
\int_{|y|>2r}\frac{U(y)}{|x-y|^{N+2s}}\,dy
\le C\int_{2r}^{\infty}t^{-N-2s-1}\,dt
\le Cr^{-N-2s}.
\]
The same estimates hold for \(U(x-z)\). Combining these bounds proves
\eqref{eq3.16}.

We next prove \eqref{eq3.17}. Since \(Nt_c=N+\alpha+2s\),
\[
U(y)^{t_c}\le C(1+|y|)^{-N-\alpha-2s},
\qquad U^{t_c}\in L^1(\R^N).
\]
We split the integral defining \(I_\alpha*U^{t_c}\) into the three regions
\[
\begin{gathered}
|y|\le r/2,\\
|y|>r/2,\quad |x-y|\le r/2,\\
|y|>r/2,\quad |x-y|>r/2.
\end{gathered}
\]
On the first region, \(|x-y|\ge r/2\), and hence
\[
\int_{|y|\le r/2}\frac{U(y)^{t_c}}{|x-y|^{N-\alpha}}\,dy
\le Cr^{-(N-\alpha)}\norm{U^{t_c}}_1
\le Cr^{-(N-\alpha)}.
\]
On the second region, using \(\alpha>0\), we obtain
\[
\begin{aligned}
\int_{\substack{|y|>r/2\\ |x-y|\le r/2}}
\frac{U(y)^{t_c}}{|x-y|^{N-\alpha}}\,dy
&\le Cr^{-N-\alpha-2s}
\int_{|z|\le r/2}|z|^{-(N-\alpha)}\,dz\\
&\le Cr^{-N-2s}.
\end{aligned}
\]
On the third region,
\[
\begin{aligned}
\int_{\substack{|y|>r/2\\ |x-y|>r/2}}
\frac{U(y)^{t_c}}{|x-y|^{N-\alpha}}\,dy
&\le Cr^{-(N-\alpha)}
\int_{|y|>r/2}(1+|y|)^{-N-\alpha-2s}\,dy\\
&\le Cr^{-(N-\alpha)}r^{-\alpha-2s}
=Cr^{-N-2s}.
\end{aligned}
\]
Consequently,
\[
(I_\alpha*U^{t_c})(x)\le Cr^{-(N-\alpha)}.
\]
Finally, \(N(t_c-1)=\alpha+2s\) gives
\[
U(x)^{t_c-1}\le Cr^{-\alpha-2s}.
\]
Multiplication of the last two estimates proves \eqref{eq3.17}.
\end{proof}

\begin{lemma}
\label{Lem3.15}
There is no nonnegative \(u\in S(a_*)\) such that \(u\) is an optimizer for
\eqref{eq2.2} and
\[
\big(I_\alpha*u^{q_L}\big)u^{q_L-1}
=
\kappa u
\qquad
\text{in }H^{-s}(\R^N)
\]
for some \(\kappa>0\).
\end{lemma}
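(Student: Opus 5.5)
Suppose, for a contradiction, that such a \(u\) exists. By Lemma \ref{Lem3.13}, \(u\) coincides a.e. with a function \(U\) of the form \eqref{eq3.14}, and after a translation we may assume \(x_0=0\). Since \(u\) optimizes \eqref{eq2.2}, Lemma \ref{Lem3.10} gives
\[
(-\Delta)^sU+\beta U=\big(I_\alpha*U^{t_c}\big)U^{t_c-1}
\quad\text{in }H^{-s}(\R^N),
\qquad
\beta=\frac{t_c-1}{a_*}\norm{(-\Delta)^{s/2}U}_2^2>0.
\]
The plan is to compare the decay rates of the three terms of this equation at infinity and obtain a contradiction from Lemma \ref{Lem3.14}.

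First we upgrade the weak identity to a pointwise one for \(|x|\) large. \(U\) is smooth with \(|D^2U|\le C(1+|y|)^{-N-2}\) and \(U\in L^1\), so \((-\Delta)^sU\) is given pointwise by the singular integral and is continuous. The right-hand side is continuous as well, because \(U^{t_c}\) is bounded and integrable, hence \(I_\alpha*U^{t_c}\) is continuous. Testing with \(v\in C_c^\infty\), both sides are then \(L^1_{\mathrm{loc}}\) functions that agree as distributions, so the equation holds a.e. and, by continuity, everywhere.

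Next we evaluate the equation at \(|x|\ge R\). Rearranging,
\[
\beta U(x)=\big(I_\alpha*U^{t_c}\big)(x)U(x)^{t_c-1}-(-\Delta)^sU(x).
\]
By \eqref{eq3.15}, the left-hand side is at least \(\beta c_1|x|^{-N}\). By \eqref{eq3.16} and \eqref{eq3.17}, the right-hand side is at most \(C|x|^{-N-2s}(1+\log|x|)\). This would give \(\beta c_1\le C|x|^{-2s}(1+\log|x|)\to0\) as \(|x|\to\infty\), which contradicts \(\beta>0\). Hence no such \(u\) exists.

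I expect the main obstacle to be the justification of the pointwise identity, that is, identifying the \(H^{-s}\) element \((-\Delta)^sU\) with the singular-integral expression. The cleanest way is to note that for Schwartz-type decay of the second derivatives the Fourier definition and the principal-value integral agree on test functions, by the standard symmetric-difference formula applied to \(\int U(-\Delta)^s v\). A secondary point is that \(\beta>0\) requires \(U\not\equiv0\), which holds because \(A>0\).
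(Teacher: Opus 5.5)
Your proposal is correct and is essentially the paper's proof: you take the explicit profile from Lemma~\ref{Lem3.13} and the Euler--Lagrange equation of Lemma~\ref{Lem3.10} with \(\beta>0\), and you get a contradiction between \(\beta U\ge \beta c_1|x|^{-N}\) and the \(O(|x|^{-N-2s}(1+\log|x|))\) bounds of Lemma~\ref{Lem3.14}. One minor slip: \(U\notin L^1(\R^N)\), since \(U\sim|x|^{-N}\), and this is exactly where the logarithm in \eqref{eq3.16} comes from; the pointwise singular integral and its continuity only need \(U\in L^\infty\cap L^1(\R^N,(1+|y|)^{-N-2s}\,dy)\), which does hold.
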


\begin{proof}
Suppose, for a contradiction, that such a function \(u\) exists. By Lemma \ref{Lem3.13}, \(u\) has the explicit form \eqref{eq3.13}. Hence Lemma \ref{Lem3.14} applies to \(u\).

Since \(u\) is an optimizer for \eqref{eq2.2}, Lemma \ref{Lem3.10} gives
\begin{equation}
\label{eq3.18}(-\Delta)^s u+\beta u
=
\big(I_\alpha*u^{t_c}\big)u^{t_c-1}
\qquad
\text{in }\R^N,
\end{equation}
where
\[
\beta=
\frac{t_c-1}{a_*}
\norm{(-\Delta)^{s/2}u}_2^2>0.
\]
The equality is first understood in the distributional sense. Since \(u\) has
the smooth profile \eqref{eq3.13}, all terms in \eqref{eq3.18} are pointwise
well defined away from a bounded set, and the identity holds a.e. there.

By Lemma \ref{Lem3.14}, for large \(|x|\),
\[
|(-\Delta)^su(x)|
\le
C|x|^{-N-2s}(1+\log |x|)
\]
and
\[
\big(I_\alpha*u^{t_c}\big)(x)u(x)^{t_c-1}
\le
C|x|^{-N-2s}.
\]
Therefore \eqref{eq3.18} implies
\[
\beta u(x)
\le
C|x|^{-N-2s}(1+\log |x|)
\qquad
\text{for large }|x|.
\]
This contradicts the lower bound
\[
u(x)\ge c_1|x|^{-N}
\]
from \eqref{eq3.15}, because
\[
|x|^{-N-2s}(1+\log |x|)=o(|x|^{-N})
\qquad
\text{as }|x|\to+\infty.
\]
The contradiction proves the lemma.
\end{proof}

\begin{proposition}
\label{Pro3.16}
At the critical mass \(a=a_*\), the infimum \(m(a_*)\) is not attained on \(S(a_*)\), and problem \eqref{eq1.3} has no normalized solution with mass \(a_*\).
\end{proposition}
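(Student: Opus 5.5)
The proof treats two claims: first that there are no normalized solutions at mass \(a_*\), and then that \(m(a_*)\) is not attained. Both reduce to Lemma \ref{Lem3.15}.

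\emph{No normalized solutions.} Suppose \(u\in S(a_*)\) is a normalized solution of \eqref{eq1.3}. Proposition \ref{Pro3.9} shows that \(u\) optimizes \eqref{eq2.2}. Proposition \ref{Pro3.11} gives \(\kappa>0\) with \((I_\alpha*|u|^{q_L})|u|^{q_L-2}u=\kappa u\) in \(H^{-s}(\R^N)\). By Lemma \ref{Lem3.12}, \(u\) has a fixed sign. Both \eqref{eq1.3} and the relation from Proposition \ref{Pro3.11} are odd in \(u\), and \(\Dcal_{t_c}\) and the norms are even. So replacing \(u\) by \(-u\) if necessary, we may assume \(u\ge0\). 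The functional identity then becomes \((I_\alpha*u^{q_L})u^{q_L-1}=\kappa u\). Thus \(u\) is a nonnegative optimizer of \eqref{eq2.2} in \(S(a_*)\) satisfying \eqref{eq3.12}, which contradicts Lemma \ref{Lem3.15}.

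\emph{Nonattainment.} Suppose \(u\in S(a_*)\) attains \(m(a_*)\). Then \(u\) is a critical point of \(J|_{S(a_*)}\). By Lemma \ref{Lem2.5}, \(u\) is a weak solution of \eqref{eq1.3} for some \(\lambda\), that is, a normalized solution with mass \(a_*\). This is excluded by the first part.

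A more direct alternative avoids the Euler--Lagrange equation. By Proposition \ref{Pro3.3}, a minimizer attains equality in both \eqref{eq2.1} and \eqref{eq2.2}. Lemma \ref{Lem3.12} makes \(u\) of fixed sign, say \(u\ge0\). As an \(L^2\) optimizer of \eqref{eq2.1}, \(u\) is a critical point of \(\Dcal_{q_L}(w)/\norm{w}_2^{2q_L}\). Its Euler--Lagrange equation is \((I_\alpha*u^{q_L})u^{q_L-1}=\kappa u\) with \(\kappa=\Dcal_{q_L}(u)/a_*>0\). Here one needs \(\Dcal_{q_L}\) to be \(C^1\) on \(L^2\), which holds by the same argument as in Lemma \ref{Lem2.5} with \(\rho_{q_L}=2\). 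Lemma \ref{Lem3.15} again gives a contradiction.

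Main obstacle: the sign reduction. We need the oddness of the identity from Proposition \ref{Pro3.11} under \(u\mapsto-u\), and that \(-u\) is still an optimizer in \(S(a_*)\). Both follow immediately from evenness of \(\Dcal_p\) and the norms. Everything else is a direct invocation of earlier results.
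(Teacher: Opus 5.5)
Your argument is correct. The nonexistence half is the paper's argument (Proposition \ref{Pro3.9}, Lemma \ref{Lem3.12}, Proposition \ref{Pro3.11}, then Lemma \ref{Lem3.15}). Your ``more direct alternative'' for nonattainment is also exactly what the paper does: Proposition \ref{Pro3.3} plus the first variation of the Hardy--Littlewood--Sobolev quotient, which gives \eqref{eq3.12} with \(\kappa=\Dcal_{q_L}(u)/a_*\).

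Your primary route to nonattainment is genuinely different. You note that a minimizer is a critical point of \(J|_{S(a_*)}\), so by Lemma \ref{Lem2.5} it is a normalized solution in the paper's sense, and then you invoke the nonexistence half. This shows that nonattainment is a formal consequence of nonexistence, and it is logically economical. The cost is a heavier dependency chain: nonattainment now rests on Proposition \ref{Pro3.9}, hence on Lemma \ref{Lem3.7} and the Pohozaev identity of Lemma \ref{Lem3.5}. That identity in turn needs the regularity results of Ambrosio and the Cingolani--Gallo--Tanaka integration formulas.

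The paper's direct route is purely variational. The energy splitting in Proposition \ref{Pro3.3} forces a minimizer to optimize both sharp inequalities, and no Pohozaev identity or regularity enters. If you prefer your own route, you can still avoid the Pohozaev dependence. Use Proposition \ref{Pro3.3} instead of Proposition \ref{Pro3.9} to see that the minimizer optimizes \eqref{eq2.2}. Then run the subtraction argument of Proposition \ref{Pro3.11} on the Euler--Lagrange equation supplied by Lemma \ref{Lem2.5}.
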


\begin{proof}
We first prove nonattainment. Suppose, for a contradiction, that \(m(a_*)\) is attained by some \(u\in S(a_*)\). By Proposition \ref{Pro3.3}, \(u\) attains equality in both sharp inequalities \eqref{eq2.1} and \eqref{eq2.2}. By Lemma \ref{Lem3.12}, replacing \(u\) by \(-u\) if necessary, we may assume that \(u\ge0\). Since \(u\) also attains equality in the lower endpoint inequality \eqref{eq2.1}, it is a critical point of the quotient
\[
\frac{\Dcal_{q_L}(w)}{\norm{w}_2^{2q_L}},
\qquad
w\in L^2(\R^N)\setminus\{0\}.
\]
Taking the first variation of this quotient at \(u\), we obtain
\[
\int_{\R^N}\big(I_\alpha*u^{q_L}\big)u^{q_L-1}v\,dx
=
\frac{\Dcal_{q_L}(u)}{\norm{u}_2^2}
\int_{\R^N}uv\,dx
\]
for all \(v\in C_c^\infty(\R^N)\). Hence
\[
\big(I_\alpha*u^{q_L}\big)u^{q_L-1}
=
\kappa u
\qquad
\text{in }H^{-s}(\R^N),
\]
where
\[
\kappa=\frac{\Dcal_{q_L}(u)}{\norm{u}_2^2}>0.
\]
This contradicts Lemma \ref{Lem3.15}. Hence \(m(a_*)\) is not attained.

We next prove nonexistence. Suppose, for a contradiction, that \eqref{eq1.3} has a normalized solution \(u\in S(a_*)\). By Proposition \ref{Pro3.9}, \(u\) is an optimizer for \eqref{eq2.2}. By Lemma \ref{Lem3.12}, replacing \(u\) by \(-u\) if necessary, we may assume that \(u\ge0\). Proposition \ref{Pro3.11} then gives
\[
\big(I_\alpha*u^{q_L}\big)u^{q_L-1}
=
\kappa u
\]
for some \(\kappa>0\). Again this contradicts Lemma \ref{Lem3.15}. Therefore no normalized solution with mass \(a_*\) exists.
\end{proof}

\section{Large masses and the Pohozaev constraint}
\label{sec:large}

\begin{proposition}
\label{Pro4.1}
If \(a>a_*\), then
\begin{equation*}
\inf_{u\in S(a)}J(u)=-\infty.
\end{equation*}
\end{proposition}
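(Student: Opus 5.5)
The plan is to use a rescaled optimizer of the sharp Choquard Gagliardo--Nirenberg inequality and then send the mass-preserving dilation \eqref{eq1.2} to infinity, using the explicit formula \eqref{eq2.3} of Lemma \ref{Lem2.4}.

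\emph{Step 1: choose a test function with negative bracket.} By Lemma \ref{Lem2.3}, let \(Q\in H^s(\R^N)\setminus\{0\}\) attain \(C_{N,\alpha,s}\). Set \(w=\sqrt a\,Q/\norm{Q}_2\in S(a)\). The quotient defining \(C_{N,\alpha,s}\) is invariant under multiplication by constants, so \(w\) is still an optimizer. Hence
\[
\Dcal_{t_c}(w)=C_{N,\alpha,s}a^{t_c-1}\norm{(-\Delta)^{s/2}w}_2^2 .
\]
It follows that
\[
\norm{(-\Delta)^{s/2}w}_2^2-\frac1{t_c}\Dcal_{t_c}(w)
=\left(1-\frac{C_{N,\alpha,s}}{t_c}a^{t_c-1}\right)\norm{(-\Delta)^{s/2}w}_2^2 .
\]
Since \(a>a_*=(t_c/C_{N,\alpha,s})^{1/(t_c-1)}\), the factor in parentheses is strictly negative. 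Moreover, \(\norm{(-\Delta)^{s/2}w}_2>0\), because \(w\ne0\) lies in \(L^2\) and so cannot be constant. Thus the bracket is some \(-\delta<0\).

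\emph{Step 2: dilate.} For \(\sigma>0\), the functions \(w_\sigma\) from \eqref{eq1.2} remain in \(S(a)\). By Lemma \ref{Lem2.4},
\[
J(w_\sigma)=-\frac{\delta}{2}\sigma^{2s}-\frac1{2q_L}\Dcal_{q_L}(w)
\le-\frac{\delta}{2}\sigma^{2s},
\]
and the right-hand side tends to \(-\infty\) as \(\sigma\to\infty\). This proves the proposition.

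\emph{Possible obstacle.} The only delicate point is the existence of a function whose bracket is strictly negative. Here it comes directly from the attainment of \(C_{N,\alpha,s}\) in Lemma \ref{Lem2.3}. If one wanted to avoid using attainment, one could instead take a maximizing sequence: the quotient at some \(u\) exceeds \(t_c/a^{t_c-1}\), which is possible because this value is less than \(C_{N,\alpha,s}\). That argument works equally well, so I do not expect a genuine obstacle.
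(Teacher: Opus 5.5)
Your proposal is correct and is essentially the paper's argument. The paper takes an optimizer \(Q\in S(a_*)\) and rescales it to \(v=\sqrt{a/a_*}\,Q\in S(a)\), which is your \(w\); it then checks \(P(v)<0\) and lets \(\sigma\to+\infty\) in Lemma \ref{Lem2.4}, exactly as you do.
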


\begin{proof}
Let \(Q\in S(a_*)\) be an optimizer for the sharp inequality \eqref{eq2.2}. Then
\[
\Dcal_{t_c}(Q)
=
C_{N,\alpha,s}a_*^{t_c-1}
\norm{(-\Delta)^{s/2}Q}_2^2
=
t_c\norm{(-\Delta)^{s/2}Q}_2^2.
\]
Set
\[
v=
\sqrt{\frac{a}{a_*}}\,Q.
\]
Then \(v\in S(a)\). Moreover,
\[
\norm{(-\Delta)^{s/2}v}_2^2
=
\frac{a}{a_*}
\norm{(-\Delta)^{s/2}Q}_2^2
\]
and
\[
\Dcal_{t_c}(v)
=
\left(\frac{a}{a_*}\right)^{t_c}
\Dcal_{t_c}(Q).
\]
Therefore
\[
\begin{aligned}
P(v)
&=
\norm{(-\Delta)^{s/2}v}_2^2
-
\frac1{t_c}\Dcal_{t_c}(v)  \\
&=
\frac{a}{a_*}
\norm{(-\Delta)^{s/2}Q}_2^2
-
\left(\frac{a}{a_*}\right)^{t_c}
\norm{(-\Delta)^{s/2}Q}_2^2  \\
&=
\frac{a}{a_*}
\left[
1-
\left(\frac{a}{a_*}\right)^{t_c-1}
\right]
\norm{(-\Delta)^{s/2}Q}_2^2
<0,
\end{aligned}
\]
because \(a>a_*\).

For \(\sigma>0\), define
\[
v_\sigma(x)=\sigma^{\frac N2}v(\sigma x).
\]
Then \(v_\sigma\in S(a)\). By Lemma \ref{Lem2.4},
\[
J(v_\sigma)
=
\frac{\sigma^{2s}}2 P(v)
-
\frac1{2q_L}\Dcal_{q_L}(v).
\]
Since \(P(v)<0\), we obtain
\[
J(v_\sigma)\to-\infty
\qquad
\text{as }\sigma\to+\infty.
\]
Hence
\[
\inf_{u\in S(a)}J(u)=-\infty.
\]
\end{proof}

\begin{proposition}
\label{Pro4.2}
If \(a>a_*\), then the set
\begin{equation*}
\Pcal_a=
\left\{
 u\in S(a):P(u)=0
\right\}
\end{equation*}
is nonempty. Every normalized solution with mass \(a\) belongs to \(\Pcal_a\).
\end{proposition}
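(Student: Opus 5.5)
The plan is to exhibit an explicit element of \(\Pcal_a\) by an intermediate value argument along a one-parameter path in \(S(a)\), and then to quote Lemma \ref{Lem3.5} for the second assertion. The scaling \eqref{eq1.2} is useless here: Lemma \ref{Lem2.4} shows that \(P(u_\sigma)=\sigma^{2s}P(u)\), so dilations never change the sign of \(P\). Instead I will vary the shape of the function while keeping its mass fixed.

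First I fix a function with \(P<0\). Let \(Q\in S(a_*)\) be an optimizer of \eqref{eq2.2}, which exists by Lemma \ref{Lem2.3}, and set \(v=\sqrt{a/a_*}\,Q\in S(a)\). The computation in the proof of Proposition \ref{Pro4.1} shows that \(P(v)<0\).

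Next I fix a function with \(P>0\). Choose any \(w\in S(b)\) with \(0<b<a_*\), and set \(\tilde w=\sqrt{a/b}\,w\). Then \(\tilde w\in S(a)\), but the positivity of \(P\) does not survive this rescaling, so I avoid it. Instead I use a sum of far-apart bumps. Fix \(\varphi\in C_c^\infty(\R^N)\) with \(\norm{\varphi}_2^2=a/k\), where \(k\) is chosen so that \(a/k<a_*\). By the proof of Proposition \ref{Pro3.6}, \(\varphi\) satisfies \(\norm{(-\Delta)^{s/2}\varphi}_2^2-\frac1{t_c}\Dcal_{t_c}(\varphi)\ge(1-\frac{C_{N,\alpha,s}}{t_c}(a/k)^{t_c-1})\norm{(-\Delta)^{s/2}\varphi}_2^2>0\). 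Now put \(w_R=\sum_{j=1}^k\varphi(\cdot-Re_j)\), whose supports are disjoint for large \(R\). As \(R\to\infty\), \(\norm{w_R}_2^2=a\), the kinetic term tends to \(k\norm{(-\Delta)^{s/2}\varphi}_2^2\) because the cross terms of the Gagliardo form decay, and \(\Dcal_{t_c}(w_R)\to k\,\Dcal_{t_c}(\varphi)\) because the Riesz cross interactions decay like \(R^{-(N-\alpha)}\). Hence \(P(w_R)>0\) for \(R\) large. I fix such a \(w:=w_R\in S(a)\).

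To connect the two, I consider the segment \(\gamma(\theta)=(1-\theta)v+\theta w\) for \(\theta\in[0,1]\) and normalize it to \(\sqrt a\,\gamma(\theta)/\norm{\gamma(\theta)}_2\). This needs \(\gamma(\theta)\ne0\) for every \(\theta\). If the segment hits zero, I translate \(w\) far away first, which makes \(v\) and \(w\) asymptotically orthogonal and keeps \(P(w)>0\) by translation invariance; then \(\gamma(\theta)\) never vanishes. By Lemma \ref{Lem2.5}, \(P\) is continuous on \(H^s(\R^N)\), so the normalized path is continuous in \(S(a)\). Since \(P<0\) at \(\theta=0\) and \(P>0\) at \(\theta=1\), the intermediate value theorem gives some \(\theta_0\) whose normalized function lies in \(\Pcal_a\).

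Finally, every normalized solution with mass \(a\) satisfies \(P(u)=0\) by Lemma \ref{Lem3.5}, so it lies in \(\Pcal_a\). The main obstacle is the decoupling limit for the bumps, in particular the decay of the nonlocal cross terms in both the Gagliardo seminorm and the Riesz interaction. For compactly supported \(\varphi\) this is a routine estimate, since the kernels are evaluated at distance of order \(R\).
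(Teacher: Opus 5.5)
Your argument is correct and is essentially the paper's proof: \(v=\sqrt{a/a_*}\,Q\) with \(P(v)<0\), then a sum of \(k\) far-apart compactly supported bumps with \(P>0\), then the normalized segment and the intermediate value theorem, with Lemma~\ref{Lem3.5} giving the last assertion. The differences are minor:
\begin{itemize}
\item You obtain \(P(w)>0\) from the sharp inequality \eqref{eq2.2} applied to bumps of mass \(a/k<a_*\). The paper instead reads off the limit \(a\norm{(-\Delta)^{s/2}\phi}_2^2-\frac{a^{t_c}}{t_c}k^{1-t_c}\Dcal_{t_c}(\phi)\) and takes \(k\) large. This is the same quantity, namely \(kP(\varphi)\).
\item You keep the segment away from \(0\) by translating \(w\), whereas the paper uses \(v,w\ge0\). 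Neither is actually needed: the segment can vanish only if \(w=-v\), and \(P(-v)=P(v)<0<P(w)\) rules this out.
\item You should place the bumps at \(jRe_1\) rather than \(Re_j\), since \(k\) may exceed \(N\).
\end{itemize}
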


\begin{proof}
By Lemma \ref{Lem2.3} and the proof of Proposition \ref{Pro4.1}, there
exists \(v\in S(a)\), \(v\ge0\), such that \(P(v)<0\). We construct a
nonnegative \(w\in S(a)\) with \(P(w)>0\).

Choose \(\phi\in C_c^\infty(\R^N)\), \(\phi\ge0\), with
\(\norm{\phi}_2=1\), and fix \(r_0>0\) such that
\(\operatorname{supp}\phi\subset B_{r_0}(0)\). For an integer \(k\ge1\)
and \(R>4r_0\), let \(e_1=(1,0,\ldots,0)\) and set
\[
\phi_j^R(x)=\phi(x-jRe_1),
\qquad
w_{k,R}(x)=\sqrt{\frac ak}\sum_{j=1}^k\phi_j^R(x).
\]
The supports are pairwise disjoint. Thus
\[
\norm{w_{k,R}}_2^2
=\frac ak\sum_{j=1}^k\norm{\phi_j^R}_2^2=a,
\]
so \(w_{k,R}\in S(a)\).

We estimate the interactions between distinct translates. Denote by
\(c_{N,s}>0\) the constant in the singular integral definition of
\(( -\Delta)^s\). For \(i\ne j\), the bilinear Gagliardo identity and the
disjointness of the supports give
\[
\begin{aligned}
\int_{\R^N}(-\Delta)^{s/2}\phi_i^R
(-\Delta)^{s/2}\phi_j^R\,dx
&=-c_{N,s}\iint_{\R^N\times\R^N}
\frac{\phi_i^R(x)\phi_j^R(y)}{|x-y|^{N+2s}}\,dx\,dy.
\end{aligned}
\]
On these supports,
\[
|x-y|\ge |i-j|R-2r_0\ge R/2.
\]
It follows that
\[
\left|\int_{\R^N}(-\Delta)^{s/2}\phi_i^R
(-\Delta)^{s/2}\phi_j^R\,dx\right|
\le CR^{-N-2s}\norm{\phi}_1^2.
\]
Consequently, for each fixed \(k\),
\[
\norm{(-\Delta)^{s/2}w_{k,R}}_2^2
=a\norm{(-\Delta)^{s/2}\phi}_2^2
+O_k(R^{-N-2s})
\qquad\text{as }R\to\infty.
\]
Here and below, the constants in \(O_k\) may depend on \(k\), as well as
on the fixed parameters, but not on \(R\).

For the Choquard term, disjointness gives
\[
|w_{k,R}|^{t_c}
=\left(\frac ak\right)^{t_c/2}
\sum_{j=1}^k(\phi_j^R)^{t_c}.
\]
If \(i\ne j\), then
\[
\begin{aligned}
\int_{\R^N}
\big(I_\alpha*(\phi_i^R)^{t_c}\big)(\phi_j^R)^{t_c}\,dx
&\le CR^{-(N-\alpha)}\norm{\phi^{t_c}}_1^2.
\end{aligned}
\]
The diagonal terms are invariant under translation. Therefore
\[
\Dcal_{t_c}(w_{k,R})
=a^{t_c}k^{1-t_c}\Dcal_{t_c}(\phi)
+O_k(R^{-(N-\alpha)}).
\]
Combining the preceding estimates, for fixed \(k\) we have
\[
P(w_{k,R})\longrightarrow
 a\norm{(-\Delta)^{s/2}\phi}_2^2
 -\frac{a^{t_c}}{t_c}k^{1-t_c}\Dcal_{t_c}(\phi)
\qquad\text{as }R\to\infty.
\]
Since \(t_c>1\) and \(\norm{(-\Delta)^{s/2}\phi}_2^2>0\), we first choose
\(k\) so large that
\[
\frac{a^{t_c}}{t_c}k^{1-t_c}\Dcal_{t_c}(\phi)
\le\frac a2\norm{(-\Delta)^{s/2}\phi}_2^2.
\]
Keeping this \(k\) fixed, we then choose \(R\) sufficiently large to obtain
\[
P(w_{k,R})\ge\frac a4\norm{(-\Delta)^{s/2}\phi}_2^2>0.
\]
Set \(w=w_{k,R}\).

Both \(w\) and \(v\) are nonnegative and belong to \(S(a)\). Hence
\[
\norm{(1-t)w+tv}_2^2
\ge a\big((1-t)^2+t^2\big)\ge\frac a2,
\qquad 0\le t\le1.
\]
Thus
\[
\eta(t)=\sqrt a\,
\frac{(1-t)w+tv}{\norm{(1-t)w+tv}_2},
\qquad 0\le t\le1,
\]
is a continuous path in \(S(a)\) from \(w\) to \(v\). Since \(P\) is
continuous, \(P(w)>0\), and \(P(v)<0\), the intermediate value theorem
gives \(t_0\in(0,1)\) such that \(P(\eta(t_0))=0\). Therefore
\(\Pcal_a\ne\emptyset\). The last assertion follows from
Lemma \ref{Lem3.5}.
\end{proof}

\section{Proofs of the main theorems}\label{sec:proofs}
\begin{proof}[Proof of Theorem \ref{Thm1.1}]
Let \(0<a<a_*\). By Proposition \ref{Pro3.3},
\[
m(a)
=
-\frac1{2q_L}L_{N,\alpha}a^{q_L},
\]
and \(m(a)\) is not attained on \(S(a)\). Moreover, Proposition \ref{Pro3.6} shows that problem \eqref{eq1.3} has no normalized solution with mass \(a\). This proves Theorem \ref{Thm1.1}.
\end{proof}

\begin{proof}[Proof of Theorem \ref{Thm1.2}]
By Proposition \ref{Pro3.3},
\[
m(a_*)
=
-\frac1{2q_L}L_{N,\alpha}a_*^{q_L}.
\]
Moreover, Proposition \ref{Pro3.16} shows that \(m(a_*)\) is not attained on \(S(a_*)\), and that problem \eqref{eq1.3} has no normalized solution with mass \(a_*\). This proves Theorem \ref{Thm1.2}.
\end{proof}

\begin{proof}[Proof of Theorem \ref{Thm1.3}]
Let \(a>a_*\). Proposition \ref{Pro4.1} gives
\[
\inf_{u\in S(a)}J(u)=-\infty.
\]
Moreover, Proposition \ref{Pro4.2} shows that
\[
\Pcal_a
=
\{u\in S(a):P(u)=0\}
\]
is nonempty and that every normalized solution with mass \(a\) belongs to \(\Pcal_a\). Finally, if \(u\in S(a)\) is a normalized solution of \eqref{eq1.3}, then the Pohozaev identity follows from Lemma \ref{Lem3.5}, while the identity for the Lagrange multiplier and the sign condition
\[
\lambda<0
\]
follow from Lemma \ref{Lem3.8}. This proves Theorem \ref{Thm1.3}.
\end{proof}

\section*{Acknowledgments}

Y. Chen was supported by the National Natural Science Foundation of China (12161007) and Guangxi Natural Science Foundation Project (2023GXNSFAA026190).
Z. Yang was supported by the National Natural Science Foundation of China (12301145, 12261107), Yunnan Fundamental Research Projects (202201AU070031, 202401AU070123). 

\noindent{\bf Data availability:}  Data sharing is not applicable to this article as no new data were created or analyzed in this study.
	
\noindent{\bf Conflict of Interest:} The authors declare that they have no conflict of interest.


\begin{thebibliography}{99}

\bibitem{Ambrosio2025Remarks}
V. Ambrosio,
\newblock Remarks on the nonlinear fractional Choquard equation,
\newblock {\it Fract. Calc. Appl. Anal.} {\bf 28} (2025), 2241--2301.

\bibitem{BellazziniJeanjeanLuo2013}
J. Bellazzini, L. Jeanjean and T. Luo,
\newblock Existence and instability of standing waves with prescribed norm for a class of Schr\"odinger--Poisson equations,
\newblock {\it Proc. Lond. Math. Soc. (3)} {\bf 107} (2013), no. 2, 303--339.

\bibitem{ChenKumarYangZhang2026}
S. Chen, V. Kumar, Z. Yang and X. Zhang,
\newblock Normalized solutions for a class of fractional Choquard equations with the HLS lower critical term and a nonlocal perturbation,
\newblock preprint, arXiv:2604.12774, 2026.

\bibitem{ChenLiOu2006}
W. Chen, C. Li and B. Ou,
\newblock Classification of solutions for an integral equation,
\newblock {\it Comm. Pure Appl. Math.} {\bf 59} (2006), 330--343.

\bibitem{ChenYangTJM2025}
Z. Chen and Y. Yang,
\newblock Normalized solutions for the fractional Choquard equations with lower critical exponent and nonlocal perturbation,
\newblock {\it Taiwanese J. Math.} {\bf 29} (2025), no. 2, 261--294.

\bibitem{CingolaniGalloTanaka2024}
S. Cingolani, M. Gallo and K. Tanaka,
\newblock Infinitely many free or prescribed mass solutions for fractional Hartree equations and Pohozaev identities,
\newblock {\it Adv. Nonlinear Stud.} {\bf 24} (2024), no. 2, 303--334.

\bibitem{CingolaniJeanjean2019}
S. Cingolani and L. Jeanjean,
\newblock Stationary waves with prescribed \(L^2\)-norm for the planar Schr\"odinger--Poisson system,
\newblock {\it SIAM J. Math. Anal.} {\bf 51} (2019), no. 4, 3533--3568.

\bibitem{DAveniaSicilianoSquassina2015}
P. d'Avenia, G. Siciliano and M. Squassina,
\newblock On fractional Choquard equations,
\newblock {\it Math. Models Methods Appl. Sci.} {\bf 25} (2015), no. 8, 1447--1476.

\bibitem{FengHeMeng2023}
Z. Feng, X. He and Y. Meng,
\newblock Normalized solutions of fractional Choquard equation with critical nonlinearity,
\newblock {\it Differential Integral Equations} {\bf 36} (2023), no. 7--8, 593--620.

\bibitem{HeRadulescuZou2022}
X. He, V. D. R\u adulescu and W. Zou,
\newblock Normalized ground states for the critical fractional Choquard equation with a local perturbation,
\newblock {\it J. Geom. Anal.} {\bf 32} (2022), no. 10, Paper No. 252, 51 pp.

\bibitem{Jeanjean1997}
L. Jeanjean,
\newblock Existence of solutions with prescribed norm for semilinear elliptic equations,
\newblock {\it Nonlinear Anal.} {\bf 28} (1997), 1633--1659.

\bibitem{LanHeMeng2023}
J. Lan, X. He and Y. Meng,
\newblock Normalized solutions for a critical fractional Choquard equation with a nonlocal perturbation,
\newblock {\it Adv. Nonlinear Anal.} {\bf 12} (2023), no. 1, Paper No. 20230112, 40 pp.

\bibitem{LiLuo2020}
G. Li and X. Luo,
\newblock Existence and multiplicity of normalized solutions for a class of fractional Choquard equations,
\newblock {\it Sci. China Math.} {\bf 63} (2020), no. 3, 539--558.

\bibitem{LiYe2014}
G. Li and H. Ye,
\newblock The existence of positive solutions with prescribed \(L^2\)-norm for nonlinear Choquard equations,
\newblock {\it J. Math. Phys.} {\bf 55} (2014), 121501, 19 pp.

\bibitem{Lieb1977}
E. H. Lieb,
\newblock Existence and uniqueness of the minimizing solution of Choquard's nonlinear equation,
\newblock {\it Stud. Appl. Math.} {\bf 57} (1977), 93--105.

\bibitem{Lieb1983}
E. H. Lieb,
\newblock Sharp constants in the Hardy--Littlewood--Sobolev and related inequalities,
\newblock {\it Ann. of Math.} {\bf 118} (1983), 349--374.

\bibitem{LiebLoss2001}
E. H. Lieb and M. Loss,
\newblock {\it Analysis},
\newblock Graduate Studies in Mathematics, Vol. 14, American Mathematical Society, Providence, RI, 2001.

\bibitem{Lions1982}
P.-L. Lions,
\newblock Sym\'etrie et compacit\'e dans les espaces de Sobolev,
\newblock {\it J. Funct. Anal.} {\bf 49} (1982), 315--334.

\bibitem{MorozVanSchaftingen2013}
V. Moroz and J. Van Schaftingen,
\newblock Groundstates of nonlinear Choquard equations: existence, qualitative properties and decay asymptotics,
\newblock {\it J. Funct. Anal.} {\bf 265} (2013), 153--184.

\bibitem{MorozVanSchaftingen2015}
V. Moroz and J. Van Schaftingen,
\newblock Existence of groundstates for a class of nonlinear Choquard equations,
\newblock {\it Trans. Amer. Math. Soc.} {\bf 367} (2015), 6557--6579.

\bibitem{PalatucciPisante2014}
G. Palatucci and A. Pisante,
\newblock Improved Sobolev embeddings, profile decomposition, and concentration-compactness for fractional Sobolev spaces,
\newblock {\it Calc. Var. Partial Differential Equations} {\bf 50} (2014), 799--829.

\bibitem{YuanChenTang2020}
S. Yuan, S. Chen and X. Tang,
\newblock Normalized solutions for Choquard equations with general nonlinearities,
\newblock {\it Electron. Res. Arch.} {\bf 28} (2020), no. 1, 291--309.

\end{thebibliography}
\end{document}